\numberwithin{equation}{section} 
\newenvironment{pdeq}{ \left\{ \begin{aligned}}{\end{aligned}\right.}
\newcommand{\np}[1]{(#1)}
\newcommand{\bp}[1]{\big(#1\big)}
\newcommand{\bb}[1]{\big[#1\big]}
\newcommand{\Bp}[1]{\bigg(#1\bigg)}
\newcommand{\Bb}[1]{\bigg[#1\bigg]}
\newcommand{\cali}{{\mathcal I}}
\newcommand{\calp}{{\mathcal P}}
\newcommand{\calt}{{\mathcal T}}
\newcommand{\calx}{{\mathcal X}}
\newcommand{\caly}{{\mathcal Y}}
\newcommand{\R}{\mathbb{R}}
\newcommand{\Z}{\mathbb{Z}}
\newcommand{\CNumbers}{\mathbb{C}}
\newcommand{\N}{\mathbb{N}}
\DeclareMathOperator{\e}{e}
\DeclareMathOperator{\id}{Id}
\DeclareMathOperator{\Div}{div}
\DeclareMathOperator{\supp}{supp}
\newcommand{\ra}{\rightarrow}
\newcommand{\set}[1]{\ensuremath{\{#1\}}}
\newcommand{\setc}[2]{\ensuremath{\{#1\ \lvert\ #2\}}}
\newcommand{\setcl}[2]{\ensuremath{\bigl\{#1\ \lvert\ #2\bigr\}}}
\newcommand{\closure}[2]{\overline{#1}^{#2}}
\newcommand{\proj}{\calp}
\newcommand{\projsymbol}{\kappa_0}
\newcommand{\projcompl}{\calp_\bot}
\newcommand{\hproj}{\calp_H}
\newcommand{\quotientmap}{\pi}
\newcommand{\grp}{G}
\newcommand{\dualgrp}{\widehat{G}}
\newcommand{\grpH}{H}
\newcommand{\dualgrpH}{\widehat{H}}
\newcommand{\idmatrix}{I}
\newcommand{\Rnper}{\R^n\times(0,\per)}
\newcommand{\grad}{\nabla}
\newcommand{\gradmap}{\mathrm{grad}}
\newcommand{\gradmapinverse}{\mathrm{grad}^{-1}}
\newcommand{\dx}{{\mathrm d}x}
\newcommand{\dg}{{\mathrm d}g}
\newcommand{\ds}{{\mathrm d}s}
\newcommand{\dt}{{\mathrm d}t}
\newcommand{\dxi}{{\mathrm d}\xi}
\newcommand{\SR}{\mathscr{S}}
\newcommand{\TDR}{\mathscr{S^\prime}}
\newcommand{\DDR}{\mathcal{D^\prime}}
\newcommand{\linf}[2]{\langle #1, #2\rangle}
\newcommand{\ft}[1]{\widehat{#1}}
\newcommand{\ift}[1]{#1^\vee}
\newcommand{\FT}{\mathscr{F}}
\newcommand{\iFT}{\mathscr{F}^{-1}}
\newcommand{\norm}[1]{\lVert#1\rVert}
\newcommand{\oseennorm}[2]{\norm{#1}_{#2,\rey,\mathrm{Oseen}}}
\newcommand{\oseennormtwod}[2]{\norm{#1}_{#2,\rey,\mathrm{Oseen2D}}}
\newcommand{\stokesnorm}[2]{\norm{#1}_{#2,\mathrm{Stokes}}}
\newcommand{\snorm}[1]{{\lvert #1 \rvert}}
\newcommand{\snorml}[1]{{\bigl\lvert #1 \big\rvert}}
\newcommand{\WSR}[2]{W^{#1,#2}}
\newcommand{\CR}[1]{C^{#1}}  
\newcommand{\LR}[1]{L^{#1}}
\newcommand{\LRloc}[1]{L^{#1}_{loc}} 
\newcommand{\CRi}{\CR \infty}
\newcommand{\CRci}{\CR \infty_0}
\newcommand{\CRc}[1]{\CR{#1}_0}
\newcommand{\LRsigma}[1]{L^{#1}_{\sigma}} 
\newcommand{\gradspace}[1]{\mathscr{G}^{#1}}
\newcommand{\WSRsigma}[2]{W^{#1,#2}_{\sigma}} 
\newcommand{\WSRsigmacompl}[2]{W^{#1,#2}_{\sigma,\bot}} 
\newcommand{\LRsigmacompl}[1]{L^{#1}_{\sigma,\bot}}
\newcommand{\CRcisigma}{\CR{\infty}_{0,\sigma}}
\newcommand{\CRper}{\CR{\infty}_{\mathrm{per}}}
\newcommand{\CRciper}{\CR{\infty}_{0,\mathrm{per}}}
\newcommand{\CRcisigmaper}{\CR{\infty}_{0,\sigma,\mathrm{per}}}
\newcommand{\WSRsigmaper}[1]{W^{#1}_{\sigma,\mathrm{per}}} 
\newcommand{\WSRsigmapercompl}[2]{W^{#1,#2}_{\sigma,\mathrm{per},\bot}} 
\newcommand{\xoseen}[1]{{X}^{#1}_{\sigma,\mathrm{Oseen}}}
\newcommand{\xoseentwod}[1]{{X}^{#1}_{\sigma,\mathrm{Oseen2D}}}
\newcommand{\xpres}[1]{{X}^{#1}_{\mathrm{pres}}}
\newcommand{\xstokes}[1]{{X}^{#1}_{\sigma,\mathrm{Stokes}}}
\newcommand{\maxregspacelinns}[1]{X^{#1}}
\newcommand{\maxregspacelinnss}[1]{X_{\mathrm{s}}^{#1}}
\newcommand{\maxregspacelinnstp}[1]{X_{\mathrm{tp}}^{#1}}
\newcommand{\vvel}{v}
\newcommand{\wvel}{w}
\newcommand{\twvel}{\tilde{w}}
\newcommand{\uvel}{u}
\newcommand{\uvelft}{\ft{u}}
\newcommand{\uvels}{\uvel_{\mathrm{s}}}
\newcommand{\uveltp}{\uvel_{\mathrm{tp}}}
\newcommand{\upress}{\upres_{\mathrm{s}}}
\newcommand{\uprestp}{\upres_{\mathrm{tp}}}
\newcommand{\upres}{\mathfrak{p}}
\newcommand{\upresft}{\ft{\mathfrak{p}}}
\newcommand{\tuvel}{\tilde{u}}
\newcommand{\tupres}{\tilde{\mathfrak{p}}}
\newcommand{\ouvel}{\overline{u}}
\newcommand{\oupres}{\overline{\mathfrak{p}}}
\newcommand{\tf}{\tilde{f}}
\newcommand{\ALgeneric}{\mathrm{A}}
\newcommand{\ALTP}{{\ALgeneric}_{\mathrm{TP}}}
\newcommand{\ALTPinverse}{{\ALgeneric}^{-1}_{\rm TP}}
\newcommand{\tin}{\text{in }}
\newcommand{\tif}{\text{if }}
\newcommand{\tand}{\text{and }}
\newcommand{\half}{\frac{1}{2}}
\renewcommand{\epsilon}{\varepsilon}
\renewcommand{\phi}{\varphi}
\newcommand{\rey}{\lambda}
\newcommand{\tay}{\calt}
\newcommand{\per}{\tay}
\newcommand{\iper}{\frac{1}{\tay}}
\newcommand{\perf}{\frac{2\pi}{\tay}}
\newcommand{\iperf}{\frac{\tay}{2\pi}}
\newcommand{\dualrho}{\hat{\rho}}
\newcommand{\mmultiplier}{m}
\newcommand{\Mmultiplier}{M}
\newcommand{\polynomial}{P}
\newcommand{\fs}{f_{\mathrm{s}}}
\newcommand{\ftp}{f_{\mathrm{tp}}}
\newcommand{\bijection}{\Pi}
\newcommand{\bijectioninv}{\Pi^{-1}}
\newcommand{\newCCtr}[2][d]{
\newcounter{#2}\setcounter{#2}{0}
\expandafter\xdef\csname kyedtheconst#2\endcsname{#1}
}
\newcommand{\Cc}[2][nolabel]{
\stepcounter{#2}
\expandafter\ensuremath{\csname kyedtheconst#2\endcsname_{\arabic{#2}}}
\ifthenelse{\equal{#1}{nolabel}}
{}
{\expandafter\xdef\csname kyedconst#1\endcsname
{\expandafter\ensuremath{\csname kyedtheconst#2\endcsname_{\arabic{#2}}}}}
}
\newcommand{\CcSetCtr}[2]{
\setcounter{#1}{#2}
}
\newcommand{\Cclast}[1]{
\expandafter\ensuremath{\csname kyedtheconst#1\endcsname_{\arabic{#1}}}
}
\newcommand{\Ccllast}[1]{
\addtocounter{#1}{-1}
\expandafter\ensuremath{\csname kyedtheconst#1\endcsname_{\arabic{#1}}}
\addtocounter{#1}{1}
}
\newcommand{\const}[1]{
\expandafter{\ifcsname kyedconst#1\endcsname
  \csname kyedconst#1\endcsname
\else
  \errmessage{Undefined Kyedconstant #1.}%
\fi}
}
\theoremstyle{plain}
\newtheorem{thm}{Theorem}[section]
\newtheorem{defn}[thm]{Definition}
\newtheorem{lem}[thm]{Lemma}
\theoremstyle{remark}
\newtheorem{rem}[thm]{Remark}
\begin{document}
\title{Maximal regularity of the time-periodic Navier-Stokes system}

\author{
Mads Kyed\\ 
Institut f\"ur Mathematik \\
Universit\"at Kassel, Germany \\
Email: {\tt mkyed@mathematik.uni-kassel.de}
}

\date{August 26, 2013}
\maketitle

\begin{abstract}
Time-periodic solutions to the linearized Navier-Stokes system in the $n$-dimensional whole-space are investigated. For time-periodic data in $\LR{q}$-spaces,
maximal regularity and corresponding a priori estimates for the associated time-periodic solutions are established.
More specifically, a Banach space of time-periodic vector fields is identified with the property that the linearized Navier-Stokes operator maps this space homeomorphically onto the $\LR{q}$-space 
of time-periodic data. 
\end{abstract}

\noindent\textbf{MSC2010:} Primary 35Q30; Secondary 35B10,76D05.\\
\noindent\textbf{Keywords:} Navier-Stokes, time-periodic, maximal regularity.

\newCCtr[C]{C}
\newCCtr[M]{M}
\newCCtr[B]{B}
\newCCtr[\epsilon]{eps}
\CcSetCtr{eps}{-1}

\section{Introduction}

We investigate the time-periodic, linearized Navier-Stokes system in the $n$-dimensional whole-space
with $n\geq 2$. More specifically, we consider the linearized Navier-Stokes system
\begin{align}\label{intro_linnssystem}
\begin{pdeq}
&\partial_t\uvel -\Delta\uvel -\rey\partial_1\uvel + \grad\upres = f && \tin\R^n\times\R,\\
&\Div\uvel =0 && \tin\R^n\times\R
\end{pdeq}
\end{align} 
for an Eulerian velocity field $\uvel:\R^n\times\R\ra\R^n$ and pressure term
$\upres:\R^n\times\R\ra\R$ as well as data $f:\R^n\times\R\ra\R^n$ that are all $\per$-time-periodic, that is, 
\begin{align}\label{intro_timeperiodicsolution}
\begin{aligned}
&\forall (x,t)\in\R^n\times\R:\quad \uvel(x,t) = \uvel(x,t+\per)\quad\tand\quad \upres(x,t) = \upres(x,t+\per)
\end{aligned}
\end{align}
and 
\begin{align}\label{intro_timeperiodicdata}
\begin{aligned}
&\forall (x,t)\in\R^n\times\R:\quad f(x,t) = f(x,t+\per).
\end{aligned}
\end{align}
The time period $\per>0$ remains fixed.  The constant $\rey\in\R$ determines the type of linearization. If $\rey=0$ the system 
\eqref{intro_linnssystem} is a Stokes system, while $\rey\neq 0$ turns \eqref{intro_linnssystem} into an Oseen system.
These are the two possible ways to linearize the Navier-Stokes system.             

The main goal in this paper is to identify a Banach space $\maxregspacelinns{q}$ with the property that  
for any vector field $f\in\LR{q}\bp{\Rnper}^n$ satisfying \eqref{intro_timeperiodicdata} there is a unique solution $(\uvel,\upres)$ in $\maxregspacelinns{q}$ to \eqref{intro_linnssystem}--\eqref{intro_timeperiodicsolution} such that
\begin{align}\label{intro_aprioriest}
\norm{(\uvel,\upres)}_{\maxregspacelinns{q}} \leq C\,\norm{f}_q,
\end{align}
with a constant $C$ depending only on $\rey,\per,q$ and $n$.
Note that any $f\in\LR{q}\bp{\R^n\times(0,\per)}^n$ has a trivial extension to a time-periodic 
vector field satisfying \eqref{intro_timeperiodicdata}.
More precisely, we wish to establish a functional analytic setting in which \eqref{intro_linnssystem}--\eqref{intro_timeperiodicdata} can be written on an 
operator form
\begin{align*}
\ALgeneric(\uvel,\upres) = f
\end{align*}  
such that the operator
\begin{align*}
\ALgeneric:\maxregspacelinns{q}\ra\LR{q}\bp{\Rnper}^n
\end{align*}
is a homeomorphism. We say that a function space $\maxregspacelinns{q}$ with this property establishes maximal regularity in the $\LR{q}$-setting for the linearized, time-periodic Navier-Stokes system. In this context, regularity refers not only to the order of differentiability of 
$(\uvel,\upres)$, but also, and in particular, to the summability of $\uvel$ and $\upres$. 

In order to identify the space $\maxregspacelinns{q}$, we shall employ the theory of Fourier multipliers. This may not seem surprising, as similar
results for both the corresponding steady-state and initial-value problem are traditionally established using Fourier multipliers. However, it is 
not directly clear how to employ the Fourier transform on the space-time domain $\Rnper$. The main idea behind the approach presented in the 
following is to identify $\Rnper$ with the group $\grp:=\R^n\times\R/\per\Z$. Equipped with the canonical topology, $\grp$ is a locally compact 
abelian group. As such, there is a naturally defined Fourier transform $\FT_\grp$ associated to $\grp$. 
Moreover, as $\grp$ inherits a differentiable structure from $\R^n\times\R$ in a canonical way, we may view \eqref{intro_linnssystem} as a system of differential equations on $\grp$. 
Employing the Fourier transform $\FT_\grp$, we then obtain a representation of the solution in terms of a Fourier multiplier defined
on the dual group $\dualgrp$. Based on this representation, the space $\maxregspacelinns{q}$ and corresponding \textit{a priori} estimate 
\eqref{intro_aprioriest} will be established. 
Since multiplier theorems like the theorems of Mihlin, Lizorkin or Marcinkiewicz are 
only available in an Euclidean setting $\R^m$, and \emph{not} in the general setting of group multipliers, we shall employ a so-called transference principle.
More specifically, we shall use a theorem, which in its original form is due to \textsc{de Leeuw} \cite{Leeuw1965}, that enables us to 
study the properties of a multiplier defined on $\dualgrp$ in terms of a corresponding multiplier defined on $\R^{n+1}$. 

\section{Statement of main result}

In order to state the main result, we first introduce an appropriate decomposition of the problem. For this purpose, we express the data
$f$ as a sum of a time-independent part $\fs$ and a time-periodic part $\ftp$ with vanishing time-average over the periodic, that is, 
$f=\fs+\ftp$ with $\fs:\R^n\ra\R^n$
and $\ftp:\R^n\times\R\ra\R^n$ satisfying \eqref{intro_timeperiodicdata} and $\int_0^\per \ftp(x,t)\,\dt=0$. 
Based on this decomposition of $f$, we 
split the investigation of \eqref{intro_linnssystem} into two parts: A steady-state problem with data $\fs$ 
corresponding to time-independent unknowns $(\uvels,\upress)$, and a time-periodic problem with data $\ftp$ corresponding to
time-periodic unknowns $(\uveltp,\uprestp)$ with vanishing time-average over the periodic.
In this way, we obtain a solution $(\uvel,\upres)=(\uvels+\uveltp,\upress+\uprestp)$ to \eqref{intro_linnssystem}. We shall identify function spaces 
$\maxregspacelinnss{q}$ and 
$\maxregspacelinnstp{q}$ that establish maximal regularity for $\uvels$ and $\uveltp$ separately.
We thus obtain $\maxregspacelinns{q}=\bp{\maxregspacelinnss{q}\oplus\maxregspacelinnstp{q}}\times\xpres{q}$, 
where $\xpres{q}$ is the function space that establishes maximal regularity for the pressure term $\upres$.
It turns out that the regularity, more specifically the summability, of the velocity fields in $\maxregspacelinnss{q}$ and $\maxregspacelinnstp{q}$ are substantially different. 

We denote points in $\R^n\times\R$ by $(x,t)$, and refer to $x$ as the spatial and $t$ as the time variable. Our main result is formulated in terms of the function spaces:  
\begin{align*}
&\CRper(\R^n\times\R) := \setcl{U\in\CRi(\R^n\times\R)}{\forall t\in\R:\ U(\cdot,t+\per)=U(\cdot,t)},\\
&\CRciper(\R^n\times[0,\per]) := \setcl{u\in\CRci(\R^n\times[0,\per])}{\exists U\in\CRper(\R^n\times\R):\ u=U_{|\R^n\times[0,\per]}},\\
&\CRcisigmaper(\R^n\times[0,\per]) := \setcl{u\in\CRciper(\R^n\times[0,\per])^n}{\Div_x u = 0},
\end{align*}
for $q\in(1,\infty)$ the Lebesgue space of solenoidal vector fields
\begin{align*}
&\LRsigma{q}\bp{\R^n\times(0,\per)}:=\closure{\CRcisigmaper(\R^n\times[0,\per])}{\norm{\cdot}_{q}}=
\setc{u\in\LR{q}\bp{\R^n\times(0,\per)}}{\Div_x u =0},\\
&\norm{u}_{q} := \norm{u}_{\LR{q}\bp{\R^n\times(0,\per)}} 
\end{align*}
and the Sobolev space of time-periodic, solenoidal, vector fields  
\begin{align*}
\begin{aligned}
&\WSRsigmaper{2,1,q}\bp{\R^n\times(0,\per)}:= \closure{\CRcisigmaper(\R^n\times[0,\per])}{\norm{\cdot}_{2,1,q}},\\
&\norm{u}_{2,1,q} := 
\Bp{\sum_{\snorm{\alpha}\leq 2,\snorm{\beta}\leq 1} \norm{\partial_x^\alpha u}^q_{\LR{q}\bp{\R^n\times(0,\per)}} + 
\norm{\partial_t^\beta u}^q_{\LR{q}\bp{\R^n\times(0,\per)}}}^{1/q}. 
\end{aligned}
\end{align*}
In order to incorporate the decomposition described above on the level of function spaces, we introduce the projection
\begin{align}\label{intro_defofproj} 
&\proj:\CRciper(\R^n\times[0,\per])\ra\CRciper(\R^n\times[0,\per]),\quad \proj u(x,t):=\iper\int_0^\per u(x,s)\,\ds
\end{align}
and put $\projcompl:=\id-\proj$. Observe that $\proj$ and $\projcompl$ decompose a time-periodic vector field $u$ into a time-independent part
$\proj u$ and a time-periodic part $\projcompl u$ with vanishing time-average over the period. Both projections extend by continuity to
bounded operators on $\LRsigma{q}\bp{\R^n\times(0,\per)}$ and $\WSRsigmaper{2,1,q}\bp{\R^n\times(0,\per)}$.
Finally, we put
\begin{align*}
&\WSRsigmapercompl{2,1}{q}\bp{\R^n\times(0,\per)}:= 
\projcompl\WSRsigmaper{2,1,q}\bp{\R^n\times(0,\per)},\\
&\LRsigmacompl{q}\bp{\R^n\times(0,\per)} := \projcompl\LRsigma{q}\bp{\R^n\times(0,\per)}.
\end{align*}

Our first main theorem states that $\maxregspacelinnstp{q}=\WSRsigmapercompl{2,1}{q}\bp{\R^n\times(0,\per)}$. 
We state the theorem in a setting of solenoidal vector-fields and thereby eliminate the need to characterize the pressure term. 
More specifically, we show:

\begin{thm}\label{mainresult_tpmaxregthm}
Let $q\in(1,\infty)$. For each $f\in\LRsigmacompl{q}\bp{\R^n\times(0,\per)}$ there is a unique 
solution $\uvel\in\WSRsigmapercompl{2,1}{q}\bp{\R^n\times(0,\per)}$ to \eqref{intro_linnssystem}, that is, a solution in the sense that the trivial 
extensions of $\uvel$ and $f$ to time-periodic vector fields 
on $\R^n\times\R$ together with $\upres=0$ constitutes a solution in the standard sense of distributions. Moreover,
\begin{align}\label{mainresult_tpmaxregthmEstimate}
\norm{\uvel}_{2,1,q}\leq \Cc[MainThmConst]{C}\,\norm{f}_q,
\end{align}
where $\Cclast{C}=\Cclast{C}(\rey,\per,n,q)$.
\end{thm}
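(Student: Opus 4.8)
The plan is to solve \eqref{intro_linnssystem} explicitly on the Fourier side over the group $\grp=\R^n\times\R/\per\Z$ and then recover the maximal-regularity estimate \eqref{mainresult_tpmaxregthmEstimate} by showing that the symbols of the top-order solution operators are $\LR{q}$-Fourier multipliers. The dual group is $\dualgrp=\R^n\times\Z$, and since both $f$ and the sought $\uvel$ lie in the range of $\projcompl$, their zeroth time-frequency coefficients vanish; it therefore suffices to work over $\dualgrp$ restricted to $k\neq0$. Applying $\FT_\grp$ to \eqref{intro_linnssystem} (and absorbing the $2\pi$-normalisation of the transform) turns the system into
\begin{align*}
\Bp{\snorm{\xi}^2 + i\bp{\tfrac{2\pi}{\per}k-\rey\xi_1}}\ft{\uvel} + i\xi\,\ft{\upres} = \ft{f},\qquad \xi\cdot\ft{\uvel}=0.
\end{align*}
Dotting the first equation with $\xi$ and using $\xi\cdot\ft{f}=0$ together with $\xi\cdot\ft{\uvel}=0$ gives $\snorm{\xi}^2\ft{\upres}=0$, hence $\ft{\upres}=0$ and $\upres=0$; this is precisely what permits the purely solenoidal formulation without a pressure term.

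Writing $D(\xi,k):=\snorm{\xi}^2+i\bp{\tfrac{2\pi}{\per}k-\rey\xi_1}$, the solution is $\ft{\uvel}=D^{-1}\ft{f}$. The key observation is that $D$ never vanishes when $k\neq0$: its real part vanishes only at $\xi=0$, where its imaginary part equals $\tfrac{2\pi}{\per}k\neq0$. To obtain maximal regularity I must bound, as $\LR{q}(\grp)$-multipliers, the symbols governing the top-order terms, namely $M_t:=\tfrac{2\pi}{\per}k\,D^{-1}$ attached to $\pt\uvel$ and $M_{jl}:=\xi_j\xi_l\,D^{-1}$ attached to $\partial_j\partial_l\uvel$ (each up to a harmless constant factor). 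Under the parabolic scaling $\xi\mapsto s\xi$, $k\mapsto s^2k$ the numerators $\tfrac{2\pi}{\per}k$ and $\xi_j\xi_l$ and the principal part $\snorm{\xi}^2$ of $D$ are homogeneous of degree $2$, the Oseen term $\rey\xi_1$ being of strictly lower degree; this already signals that $M_t$ and $M_{jl}$ ought to behave as degree-$0$ anisotropic multipliers.

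Since Mihlin-, Lizorkin- and Marcinkiewicz-type theorems are unavailable directly on $\dualgrp=\R^n\times\Z$, I would invoke the transference principle: extend each symbol to $\R^{n+1}$ by replacing the discrete frequency $k$ with a continuous variable $\eta$, verify that the extension is an $\LR{q}(\R^{n+1})$-multiplier, and then conclude via de~Leeuw's theorem that the restriction to $\R^n\times\Z$ is an $\LR{q}(\grp)$-multiplier of no larger norm. The one obstruction to a direct extension is that $D$ vanishes at $(\xi,\eta)=(0,0)$, so the naive extension is singular at the origin. The vanishing time-average is exactly what removes this: multiplying the extended symbols by a smooth cutoff $\cutoff(\eta)$ that equals $1$ for $\snorm{\eta}\geq\half$ and vanishes near $\eta=0$ leaves the values on $\R^n\times\Z$ unchanged, since every relevant lattice point has $\snorm{k}\geq1>\half$, while making the extended symbols smooth and bounded on all of $\R^{n+1}$.

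The main technical step, and the one I expect to be the principal obstacle, is verifying that the cut-off symbols $\cutoff(\eta)M_t$ and $\cutoff(\eta)M_{jl}$ satisfy the hypotheses of the Marcinkiewicz (equivalently Lizorkin) multiplier theorem on $\R^{n+1}$, i.e.\ that $\snorm{\eta}^{a_{n+1}}\prod_{i}\snorm{\xi_i}^{a_i}$ times the corresponding mixed derivative of the symbol is uniformly bounded for every exponent vector with entries in $\set{0,1}$. On $\supp\cutoff$ the cutoff keeps the argument away from the single zero of $D$, so $\snorm{D}$ is bounded below; the remaining work is tracking the two competing scales $\snorm{\xi}\sim1$ and $\snorm{\tfrac{2\pi}{\per}\eta-\rey\xi_1}\sim1$, while for large frequencies the degree-$2$ homogeneity of the principal part supplies the decay of the derivatives, each $\xi$- or $\eta$-derivative of $D^{-1}$ being compensated by its accompanying monomial; the resulting dependence on $\rey$, $\per$ and $n$ is the source of the constant in \eqref{mainresult_tpmaxregthmEstimate}. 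Granting these bounds, existence and the estimate follow by \emph{defining} $\uvel:=\FT_\grp^{-1}\bb{D^{-1}\FT_\grp f}$, whence $\pt\uvel,\partial_x^\alpha\uvel\in\LR{q}$ with $\norm{\uvel}_{2,1,q}\leq C\norm{f}_q$ and, by density of $\CRcisigmaper$, $\uvel\in\WSRsigmapercompl{2,1}{q}$. Uniqueness is immediate on the Fourier side: a solution with $f=0$ satisfies $D\,\ft{\uvel}=0$ with $D\neq0$ for $k\neq0$ and $\ft{\uvel}=0$ at $k=0$, forcing $\uvel=0$.
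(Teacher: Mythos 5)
Your proposal follows essentially the same route as the paper's proof: the Fourier-multiplier representation over $\grp$, removal of the singularity of the symbol at the origin by a cutoff in the time frequency (exploiting $\projcompl f=f$), verification of the Marcinkiewicz condition for the Euclidean extension on $\R^{n+1}$, transference back to $\dualgrp=\R^n\times\Z$ via the de~Leeuw/Edwards--Gaudry principle, and the same support argument for uniqueness. The only cosmetic difference is that you name only the top-order symbols $k\,D^{-1}$ and $\xi_j\xi_l\,D^{-1}$, whereas the norm $\norm{\cdot}_{2,1,q}$ also requires the (easier) lower-order multipliers $D^{-1}$ and $\xi_j D^{-1}$, which the paper checks by the identical argument.
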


The other maximal regularity space 
$\maxregspacelinnss{q}$ is already well-known  from the theory of steady-state, linearized Navier-Stokes systems. In order to characterize it, one
has to distinguish between $\rey=0$ (Stokes case) and $\rey\neq 0$ (Oseen case). Moreover, the case $n=2$ has to be treated separately.
For $n\geq 3$, $\rey=0$, $q\in(1,\frac{n}{2})$ we put
\begin{align*}
&\xstokes{q}(\R^n):=\setcl{\vvel\in\LRloc{1}(\R^n)^n}{\Div\vvel=0,\ \stokesnorm{\vvel}{q}<\infty},\\
&\stokesnorm{\vvel}{q} := \norm{\vvel}_{\frac{nq}{n-2q}} + \norm{\grad\vvel}_{\frac{nq}{n-q}} + \norm{\grad^2\vvel}_q.  
\end{align*}
For $n\geq 3$, $\rey\neq 0$, $q\in(1,\frac{n+1}{2})$ we put
\begin{align*}
&\xoseen{q}(\R^n):=\setcl{\vvel\in\LRloc{1}(\R^n)^n}{\Div\vvel=0,\ \oseennorm{\vvel}{q}<\infty},\\
&\oseennorm{\vvel}{q} := \snorm{\rey}^{\frac{2}{n+1}}\norm{\vvel}_{\frac{(n+1)q}{n+1-2q}} + \snorm{\rey}^{\frac{1}{n+1}}\norm{\grad\vvel}_{\frac{(n+1)q}{n+1-q}} + \snorm{\rey}\norm{\partial_1\vvel}_q + \norm{\grad^2\vvel}_q.  
\end{align*}
For $n=2$, $\rey\neq 0$, $q\in(1,\frac{3}{2})$ we put
\begin{align*}
&\xoseentwod{q}(\R^2):=\setcl{\vvel\in\LRloc{1}(\R^2)^2}{\Div\vvel=0,\ \oseennormtwod{\vvel}{q}<\infty},\\
&\oseennormtwod{\vvel}{q} := \snorm{\rey}^{\frac{2}{3}}\norm{\vvel}_{\frac{3q}{3-2q}} + \snorm{\rey}^{\frac{1}{3}}\norm{\grad\vvel}_{\frac{3q}{3-q}} + \snorm{\rey}\norm{\partial_1\vvel}_q + \norm{\grad^2\vvel}_q \\
&\qquad\qquad\qquad\quad + \snorm{\rey}\norm{\grad\vvel_2}_q + \snorm{\rey}\norm{\vvel_2}_{\frac{2q}{2-q}}.
\end{align*}
We shall not treat the case $n=2$, $\rey=0$ explicitly; see however remark \ref{remarkOn2Dcase} below. 
To characterize the maximal regularity for the pressure $\upres$, we put
\begin{align*}
\begin{aligned}
&\xpres{q}\bp{\R^n\times(0,\per)}:=\setc{\upres\in\LRloc{1}\bp{\R^n\times(0,\per)}}{\norm{\upres}_{\xpres{q}}<\infty},\\
&\norm{\upres}_{\xpres{q}}:=\Bp{\iper\int_0^\per \norm{\upres(\cdot,t)}_{\frac{nq}{n-q}}^q + \norm{\grad_x\upres(\cdot,t)}_q^q\,\dt }^{1/q}.
\end{aligned}
\end{align*}

We are now in a position to state the theorem that establishes maximal regularity in the general $\LR{q}$-setting. 
\begin{thm}[Maximal regularity in $\LR{q}$-setting]\label{mainresult_maxregthm}
Let
\begin{align}
&\maxregspacelinnss{q}(\R^n) := \xstokes{q}(\R^n) &&\text{if\ } n\geq 3,\ \rey=0,\ q\in\bp{1,\frac{n}{2}},\label{mainresult_maxregthmDefOfXs1}\\
&\maxregspacelinnss{q}(\R^n) := \xoseen{q}(\R^n) &&\text{if\ } n\geq 3,\ \rey\neq 0,\ q\in\bp{1,\frac{n+1}{2}},\label{mainresult_maxregthmDefOfXs2}\\
&\maxregspacelinnss{q}(\R^n) := \xoseentwod{q}(\R^2) &&\text{if\ } n= 2,\ \rey\neq 0,\ q\in\bp{1,\frac{3}{2}}.\label{mainresult_maxregthmDefOfXs3}
\end{align} 
For every $f\in\LR{q}\bp{\R^n\times(0,\per)}^n$ there is unique solution\footnote{We use $\oplus$ to denote either the sum of two subspaces 
of $\LRloc{1}\bp{\R^n\times(0,\per)}$ whose intersection contains only $0$, or the sum of two functions from such subspaces.}
\begin{align*}
(\uvel,\upres)=(\vvel\oplus\wvel,\upres)\in \Bp{\maxregspacelinnss{q}(\R^n)\oplus\WSRsigmapercompl{2,1}{q}
\bp{\R^n\times(0,\per)}}\times\xpres{q}\bp{\R^n\times(0,\per)} 
\end{align*}
to \eqref{intro_linnssystem} in the sense that the trivial extensions of $(\uvel,\upres)$ and $f$ to time-periodic vector fields 
on $\R^n\times\R$ that satisfy \eqref{intro_timeperiodicsolution}--\eqref{intro_timeperiodicdata} constitute a solution in the standard sense of distributions.
Moreover, 
\begin{align}\label{mainresult_maxregthmEst}
\norm{\vvel}_{\maxregspacelinnss{q}} + \norm{\wvel}_{2,1,q} + \norm{\upres}_{\xpres{q}} \leq \Cc[MainThmConst]{C}\,\norm{f}_q,
\end{align}
where $\Cclast{C}=\Cclast{C}(\rey,\per,n,q)$
\end{thm}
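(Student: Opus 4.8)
The plan is to reduce Theorem~\ref{mainresult_maxregthm} to the purely time-periodic result Theorem~\ref{mainresult_tpmaxregthm} together with the known maximal-regularity theory for the \emph{steady-state} Stokes and Oseen systems, using the Helmholtz--Leray projection to dispose of the pressure and the time-average projection $\proj$ to split the velocity into its steady and genuinely time-periodic parts.

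First I would invoke the Helmholtz--Leray projection $\hproj$ on $\R^n$ (acting only in the spatial variable), which is bounded on $\LR{q}(\R^n)^n$ for $q\in(1,\infty)$, and decompose the data pointwise in $t$ as $f=\hproj f+\grad_x\upres$ with $\grad_x\upres=(\id-\hproj)f$. This already pins down the pressure: since $\hproj$ commutes with $\partial_t$, $\Delta$ and $\partial_1$ and annihilates spatial gradients, every solution must satisfy $\grad_x\upres=(\id-\hproj)f$. I would take $\upres(\cdot,t)$ to be the Helmholtz potential normalised to lie in $\LR{\frac{nq}{n-q}}(\R^n)$, which is legitimate precisely because $q<n$ holds throughout the admissible ranges in \eqref{mainresult_maxregthmDefOfXs1}--\eqref{mainresult_maxregthmDefOfXs3}. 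Boundedness of $(\id-\hproj)$ on $\LR{q}$ yields $\norm{\grad_x\upres(\cdot,t)}_q\leq C\,\norm{f(\cdot,t)}_q$, while the Sobolev embedding $\dot{W}^{1,q}\embeds\LR{\frac{nq}{n-q}}$ controls $\norm{\upres(\cdot,t)}_{\frac{nq}{n-q}}$ by $\norm{\grad_x\upres(\cdot,t)}_q$; integrating in $t$ gives $\norm{\upres}_{\xpres{q}}\leq C\,\norm{f}_q$.

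Next I would split the solenoidal forcing $\hproj f$ in time. Setting $\fs:=\proj\hproj f$ and $\ftp:=\projcompl\hproj f$ (the two projections commute, one acting in $x$, the other in $t$), the field $\fs$ is a time-independent element of $\LRsigma{q}(\R^n)$ and $\ftp\in\LRsigmacompl{q}\bp{\Rnper}$, both bounded by $C\,\norm{f}_q$ via H\"older's inequality for $\proj$ and boundedness of $\hproj$. Applying $\proj$ to the velocity equation $\partial_t\uvel-\Delta\uvel-\rey\partial_1\uvel=\hproj f$ annihilates $\partial_t$ by periodicity and leaves the steady solenoidal problem $-\Delta\vvel-\rey\partial_1\vvel=\fs$, for which the known steady-state theory supplies a unique $\vvel\in\maxregspacelinnss{q}$ with $\norm{\vvel}_{\maxregspacelinnss{q}}\leq C\,\norm{\fs}_q$. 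Applying $\projcompl$ instead leaves the zero-time-average solenoidal problem $\partial_t\wvel-\Delta\wvel-\rey\partial_1\wvel=\ftp$, to which Theorem~\ref{mainresult_tpmaxregthm} applies and yields a unique $\wvel\in\WSRsigmapercompl{2,1}{q}\bp{\Rnper}$ with vanishing associated pressure and $\norm{\wvel}_{2,1,q}\leq C\,\norm{\ftp}_q$.

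Finally I would set $\uvel=\vvel\oplus\wvel$; this is a genuine direct sum because $\projcompl\vvel=0$ and $\proj\wvel=0$, so a field that is simultaneously time-independent and of vanishing time-average must vanish. Adding the three equations back together gives $\partial_t\uvel-\Delta\uvel-\rey\partial_1\uvel+\grad_x\upres=\hproj f+(\id-\hproj)f=f$ termwise, so $(\uvel,\upres)$ solves \eqref{intro_linnssystem} in the distributional sense after trivial periodic extension, and the three partial estimates combine to \eqref{mainresult_maxregthmEst}. For uniqueness I would take the difference of two solutions, apply $\hproj$ to conclude first that the pressure gradient and hence (by the $\LR{\frac{nq}{n-q}}$ normalisation) the pressure itself vanishes, and then split the velocity difference by $\proj$ and $\projcompl$ to invoke the uniqueness statements of the steady-state theory and of Theorem~\ref{mainresult_tpmaxregthm}. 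The only genuinely delicate point is the pressure: one must check that the Helmholtz potential of a general $\LR{q}$-field really lands in $\xpres{q}$, which is exactly where the constraint $q<n$ and the Sobolev embedding enter. Everything else is bookkeeping resting on the two cited maximal-regularity results.
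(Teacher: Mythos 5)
Your proposal is correct and follows essentially the same route as the paper: decompose $f$ into $(\id-\hproj)f\oplus\proj\hproj f\oplus\projcompl\hproj f$, recover the pressure from the gradient part (the paper packages your Sobolev-embedding step as Lemma \ref{lt_PressureMappingLem}, inverting $\gradmap$ via a Riesz potential, which likewise needs $q<n$), solve the steady part by the classical Stokes/Oseen theory and the mean-free part by Theorem \ref{mainresult_tpmaxregthm}, and prove uniqueness by projecting the difference equation. The only cosmetic slip is in the uniqueness step, where it is $(\id-\hproj)$ (not $\hproj$) applied to the difference equation that isolates $\grad\upres$; the order (pressure first versus velocity first) is immaterial.
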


\begin{rem}\label{remarkOn2Dcase}
The projection $\proj$ induces the decomposition 
\begin{align*}
\LRsigma{q}\bp{\R^n\times(0,\per)}=\LRsigma{q}(\R^n)\oplus\LRsigmacompl{q}\bp{\R^n\times(0,\per)}.
\end{align*}
Theorem \ref{mainresult_tpmaxregthm} states that 
\begin{align*}
\partial_t -\Delta -\rey\partial_1:\ \WSRsigmapercompl{2,1}{q}\bp{\R^n\times(0,\per)} \ra \LRsigmacompl{q}\bp{\R^n\times(0,\per)}
\end{align*}
is a homeomorphism. Theorem \ref{mainresult_maxregthm} is therefore basically a consequence of Theorem \ref{mainresult_tpmaxregthm} combined with the well-known fact that
in the steady-state stetting the same operator maps  $\maxregspacelinnss{q}(\R^n)$, defined in \eqref{mainresult_maxregthmDefOfXs1}--\eqref{mainresult_maxregthmDefOfXs3}, homeomorphically onto $\LRsigma{q}(\R^n)$.
Generally, if one can identify steady-state function spaces $\calx_\sigma(\R^n)$ and $\caly_\sigma(\R^n)$ of solenoidal vector fields 
such that 
$-\Delta -\rey\partial_1:\ \calx_\sigma(\R^n)\ra\caly_\sigma(\R^n)$
is a homeomorphism, Theorem \ref{mainresult_tpmaxregthm} can be employed to show that
\begin{align*}
\partial_t -\Delta -\rey\partial_1:\ \calx_\sigma(\R^n)\oplus\WSRsigmapercompl{2,1}{q}\bp{\R^n\times(0,\per)} \ra \caly_\sigma(\R^n)\oplus\LRsigmacompl{q}\bp{\R^n\times(0,\per)}
\end{align*}
is a homeomorphism. If one for example wishes to investigate a case in which the parameters $n,\rey,q$ are not covered by
\eqref{mainresult_maxregthmDefOfXs1}--\eqref{mainresult_maxregthmDefOfXs3}, one must simply identify such function spaces $\calx_\sigma(\R^n)$ and $\caly_\sigma(\R^n)$ for this particular choice of parameters. In Theorem \ref{mainresult_tpmaxregthm} we have covered only the cases where 
these spaces are well-known by what can be considered standard theory.
\end{rem}

\section{Notation}

Points in $\R^n\times\R$ are denoted by $(x,t)$ with $x\in\R^n$ and $t\in\R$.
We refer to $x$ as the spatial and to $t$ as the time variable. 

For a sufficiently regular function $u:\R^n\times\R\ra\R$, we put $\partial_i u:=\partial_{x_i} u$.
For any multiindex $\alpha\in\N_0^n$, we let $\partial_x^\alpha\uvel:= \sum_{j=1}^n \partial_j^{\alpha_j}\uvel$
and put $\snorm{\alpha}:=\sum_{j=1}^n \alpha_j$. Moreover, for $x\in\R^n$ we let 
$x^\alpha:=x_1^{\alpha_1}\cdots x_n^{\alpha_n}$.
Differential operators act only in the spatial variable unless otherwise indicated. For example, 
we denote by $\Delta u$ the Laplacian of $u$ with respect to the spatial variable, that is, 
$\Delta u:=\sum_{j=1}^n\partial_j^2 u$. For a vector field $u:\R^n\times\R\ra\R^n$ we 
let $\Div u:=\sum_{j=1}^n\partial_j u_j$ denote the divergence of $u$.

For two vectors $a,b\in\R^n$ we let $a \otimes b\in\R^{n\times n}$ denote the tensor 
with $(a\otimes b)_{ij}:=a_ib_j$. 
We denote by $\idmatrix$ the identity tensor 
$\idmatrix\in\R^{n\times n}$.

For a vector space $X$ and $A,B\subset X$, we write $X=A\oplus B$ iff $A$ and $B$ are subspaces of $X$ with 
$A\cap B=\set{0}$ and $X=A+B$. We also write $a\oplus b$ for elements of $A\oplus B$.

Constants in capital letters in the proofs and theorems are global, while constants in small letters are local to the proof in which they appear.
   
\section{Reformulation in a group setting}

In the following, we let $\grp$ denote the group
\begin{align}\label{lt_defofgrp}
\grp:=\R^n\times\R/\per\Z
\end{align} 
with addition as the group operation.
We shall reformulate \eqref{intro_linnssystem}--\eqref{intro_timeperiodicdata} and the main theorems in a setting of functions defined on $\grp$. For this purpose, we must first introduce a topology and an appropriate differentiable structure on $\grp$. It is then possible to define Lebesgue and Sobolev spaces on $\grp$.

\subsection{Differentiable structure, distributions and Fourier transform}\label{lt_differentiablestructuresubsection}

The topology and differentiable structure on $\grp$ is inherited from $\R^n\times\R$. More precisely, 
we equip $\grp$ with the quotient topology induced by the canonical quotient mapping
\begin{align}\label{lt_quotientmap}
\quotientmap :\R^n\times\R \ra \R^n\times\R/\per\Z,\quad \quotientmap(x,t):=(x,[t]).
\end{align}
Equipped with the quotient topology, $\grp$ becomes a locally compact abelian group. 
We shall use the restriction 
\begin{align*}
\bijection:\R^n\times[0,\per)\ra\grp,\quad \bijection:=\pi_{|\R^n\times[0,\per)}
\end{align*}
to identify $\grp$ with the domain $\R^n\times[0,\per)$, as $\bijection$ is clearly a (continuous) bijection. 

Via $\bijection$, one can identify the Haar measure $\dg$ on $\grp$ as the product of the Lebesgue measure on $\R^n$ and the Lebesgue measure on $[0,\per)$.
The Haar measure is unique up-to a normalization factor, which we choose such that
\begin{align*}
\forall\uvel\in\CRc{}(\grp):\quad \int_\grp \uvel(g)\,\dg = \iper\int_0^\per\int_{\R^n} \uvel\circ\bijection(x,t)\,\dx\dt,
\end{align*}
where $\CRc{}(\grp)$ denotes the space of continuous functions of compact support.  
For the sake of convenience, we will omit the $\bijection$ in integrals of $\grp$-defined functions with respect to $\dx\dt$, that is, instead of 
$\iper\int_0^\per\int_{\R^n} \uvel\circ\bijection(x,t)\,\dx\dt$ we simply write $\iper\int_0^\per\int_{\R^n} \uvel(x,t)\,\dx\dt$.

Next, we define by
\begin{align}\label{lt_smoothfunctionsongrp}
\CRi(\grp):=\setc{\uvel:\grp\ra\R}{\uvel\circ\quotientmap \in\CRi(\R^n\times\R)}
\end{align}
the space of smooth functions on $\grp$. For $\uvel\in\CRi(\grp)$ we define derivatives 
\begin{align}\label{lt_defofgrpderivatives}
\forall(\alpha,\beta)\in\N_0^n\times\N_0:\quad \partial_t^\beta\partial_x^\alpha\uvel := \bb{\partial_t^\beta\partial_x^\alpha (\uvel\circ\quotientmap)}\circ\bijectioninv.
\end{align}
It is easy to verify for $\uvel\in\CRi(\grp)$ that also $\partial_t^\beta\partial_x^\alpha\uvel\in\CRi(\grp)$. We further introduce the subspace 
\begin{align*}
\CRci(\grp):=\setc{\uvel\in\CRi(\grp)}{\supp\uvel\text{ is compact}}
\end{align*}
of compactly supported smooth functions. Clearly, $\CRci(\grp)\subset\CRc{}(\grp)$.

With a differentiable structure defined on $\grp$ via \eqref{lt_smoothfunctionsongrp}, we can introduce the space of tempered distributions on $\grp$.
For this purpose, we first recall the Schwartz-Bruhat space of generalized Schwartz functions; see for example \cite{Bruhat61}. More precisely, we define for $\uvel\in\CRi(\grp)$ the semi-norms
\begin{align*}
\forall(\alpha,\beta,\gamma)\in\N_0^n\times\N_0\times\N_0^n:\quad 
\rho_{\alpha,\beta,\gamma}(\uvel):=\sup_{(x,t)\in\grp} \snorm{x^\gamma\partial_t^\beta\partial_x^\alpha\uvel(x,t)},
\end{align*}
and put 
\begin{align*}
\SR(\grp):=\setc{\uvel\in\CRi(\grp)}{\forall(\alpha,\beta,\gamma)\in\N_0^n\times\N_0\times\N_0^n:\ \rho_{\alpha,\beta,\gamma}(\uvel)<\infty}.
\end{align*}
Clearly, $\SR(\grp)$ is a vector space, and $\rho_{\alpha,\beta,\gamma}$ a semi-norm on $\SR(\grp)$. We endow $\SR(\grp)$ with the semi-norm 
topology induced by the family $\setcl{\rho_{\alpha,\beta,\gamma}}{(\alpha,\beta,\gamma)\in\N_0^n\times\N_0\times\N_0^n}$.
The topological dual space $\TDR(\grp)$ of $\SR(\grp)$ is then well-defined. We equip $\TDR(\grp)$ with the weak* topology and refer to it 
as the space of tempered distributions on $\grp$. Observe that both $\SR(\grp)$ and $\TDR(\grp)$ remain closed under multiplication 
by smooth functions that have at most polynomial growth with respect to the spatial variables.

For a tempered distribution $\uvel\in\TDR(\grp)$, distributional derivatives 
$\partial_t^\beta\partial_x^\alpha\uvel\in\TDR(\grp)$ are defined by duality in the usual manner:
\begin{align*}
\forall \psi\in\SR(\grp):\ \linf{\partial_t^\beta\partial_x^\alpha\uvel}{\psi}:=\linf{\uvel}{(-1)^{\snorm{(\alpha,\beta)}}\partial_t^\beta\partial_x^\alpha\psi}. 
\end{align*}
It is easy to verify that $\partial_t^\beta\partial_x^\alpha\uvel$ is well-defined as an element in $\TDR(\grp)$.
For tempered distributions on $\grp$, we keep the convention that differential operators like $\Delta$ and $\Div$ act only in the 
spatial variable $x$ unless otherwise indicated. 

We shall also introduce tempered distributions on $\grp$'s dual group $\dualgrp$. We associate each $(\xi,k)\in\R^n\times\Z$ with 
the character 
$\chi:\grp\ra\CNumbers,\ \chi(x,t):=\e^{ix\cdot\xi+ik\perf t}$
on $\grp$. It is standard to verify that all characters are of this form, and we can thus identify
$\dualgrp = \R^n\times\Z$. By default, $\dualgrp$ is equipped with the compact-open topology, which in this case coincides with the product of the Euclidean topology on $\R^n$ and 
the discrete topology on $\Z$. The Haar measure on $\dualgrp$ is simply the product of the Lebesgue measure on $\R^n$ and the counting measure on $\Z$. 

A differentiable structure on $\dualgrp$ is obtained by introduction of the space 
\begin{align*}
\CRi(\dualgrp):=\setc{\wvel\in\CR{}(\dualgrp)}{\forall k\in\Z:\ \wvel(\cdot,k)\in\CRi(\R^n)}.
\end{align*}
To define the generalized Schwartz-Bruhat space on the dual group $\dualgrp$, we further introduce the semi-norms
\begin{align*}
\forall (\alpha,\beta,\gamma)\in\N_0^n\times\N_0\times\N_0^n:\ \dualrho_{\alpha,\beta,\gamma}(\wvel):= 
\sup_{(\xi,k)\in\dualgrp} \snorm{k^\beta \xi^\alpha \partial_\xi^\gamma \wvel(\xi,k)}. 
\end{align*}
We then put 
\begin{align*}
\begin{aligned} 
\SR(\dualgrp)&:=\setc{\wvel\in\CRi(\dualgrp)}{\forall (\alpha,\beta,\gamma)\in\N_0^n\times\N_0\times\N_0^n:\ \dualrho_{\alpha,\beta,\gamma}(\wvel)<\infty}.
\end{aligned}
\end{align*}
We endow the vector space $\SR(\dualgrp)$ with the semi-norm topology induced by the family
of semi-norms $\setc{\dualrho_{\alpha,\beta,\gamma}}{(\alpha,\beta,\gamma)\in\N_0^n\times\N_0\times\N_0^n}$. The topological dual space  
of $\SR(\dualgrp)$ is denoted by $\TDR(\dualgrp)$. We equip $\TDR(\dualgrp)$ with the weak* topology and refer to it 
as the space of tempered distributions on $\dualgrp$.

All function spaces have so far been defined as real vector spaces of real functions. 
Clearly, we can define them analogously as complex vector spaces of complex functions. 
When a function space is used in context with the Fourier transform, which we shall 
introduce below, we consider it as a complex vector space. 

The Fourier transform $\FT_\grp$ on $\grp$ is given by
\begin{align*}
\FT_\grp:\LR{1}(\grp)\ra\CR{}(\dualgrp),\quad \FT_\grp(\uvel)(\xi,k):=\ft{\uvel}(\xi,k):=
\iper\int_0^\per\int_{\R^n} \uvel(x,t)\,\e^{-ix\cdot\xi-ik\perf t}\,\dx\dt.
\end{align*}
If no confusion can arise, we simply write $\FT$ instead of $\FT_\grp$.
The inverse Fourier transform is formally defined by 
\begin{align*}
\iFT:\LR{1}(\dualgrp)\ra\CR{}(\grp),\quad \iFT(\wvel)(x,t):=\ift{\wvel}(x,t):=
\sum_{k\in\Z}\,\int_{\R^n} \wvel(\xi,k)\,\e^{ix\cdot\xi+ik\perf t}\,\dxi.
\end{align*}
It is standard to verify that $\FT:\SR(\grp)\ra\SR(\dualgrp)$ is a homeomorphism with $\iFT$ as the actual inverse, provided the Lebesgue measure $\dxi$ is normalized appropriately. 
By duality, $\FT$ extends to a mapping $\TDR(\grp)\ra\TDR(\dualgrp)$. More precisely, we define 
\begin{align*}
\FT:\TDR(\grp)\ra\TDR(\dualgrp),\quad \forall\psi\in\SR(\dualgrp):\ \linf{\FT(\uvel)}{\psi}:=\linf{\uvel}{\FT({\psi})}.
\end{align*}
Similarly, we define
\begin{align*}
\iFT:\TDR(\dualgrp)\ra\TDR(\grp),\quad \forall\psi\in\SR(\grp):\ \linf{\iFT(\uvel)}{\psi}:=\linf{\uvel}{\iFT({\psi})}.
\end{align*}
Clearly $\FT:\TDR(\grp)\ra\TDR(\dualgrp)$ is a homeomorphism with $\iFT$ as the actual inverse.

The Fourier transform in the setting above provides us with a calculus between the differential operators on $\grp$ and the 
polynomials on $\dualgrp$. As one easily verifies, for $\uvel\in\TDR(\grp)$ and $\alpha\in\N_0^n$, $l\in\N_0$ we have
\begin{align*}
\FT\bp{\partial_t^l\partial_x^\alpha\uvel}=i^{l+\snorm{\alpha}}\,\Big(\perf\Big)^l\,k^l\,\xi^\alpha\,\FT(\uvel)
\end{align*} 
as identity in $\TDR(\dualgrp)$.

\subsection{Sobolev spaces}\label{lt_functionspacesSection}

We let $\LR{q}(\grp)$ denote the usual Lebesgue space with respect to the Haar measure $\dg$.
A standard mollifier argument shows that $\CRci(\grp)$ is a dense subset of $\LR{q}(\grp)$.
It is standard to verify that $\LR{q}(\grp)\subset\TDR(\grp)$.

We define by
\begin{align*}
\begin{aligned}
&\WSR{2,1}{q}(\grp):=\setc{\uvel\in\LR{q}(\grp)}{\norm{\uvel}_{2,1,q}<\infty},\\
&\norm{\uvel}_{2,1,q}:=\Bp{\sum_{\snorm{\alpha}\leq 2,\snorm{\beta}\leq 1} \norm{\partial_x^\alpha\uvel}^q_q
+ \norm{\partial_t^\beta\uvel}^q_q}^{1/q}.
\end{aligned}
\end{align*}
a Sobolev space. The functions $\partial_x^\alpha\uvel$ and $\partial_t^\beta\uvel$ above 
are well-defined at the outset as tempered distributions. The condition $\norm{\uvel}_{2,1,q}<\infty$ expresses that these derivatives 
belong to $\LR{q}(\grp)$. Standard mollifier arguments yield that $\CRci(\grp)$ is dense in $\WSR{2,1}{q}(\grp)$.

We further define the following subspaces of solenoidal vector fields:
\begin{align}
&\CRcisigma(\grp):=\setc{\uvel\in\CRci(\grp)^n}{\Div\uvel=0},\\
&\LRsigma{q}(\grp):=\overline{\CRcisigma(\grp)}^{\norm{\cdot}_q},\label{lt_LRsigmadef}\\
&\WSRsigma{2,1}{q}(\grp):=\overline{\CRcisigma(\grp)}^{\norm{\cdot}_{2,1,q}}.\label{lt_AnisoWSRsigmadef}  
\end{align}
We have the following characterization of the spaces above:
\begin{lem}\label{lt_densitylemma}
For any $q\in (1,\infty)$:
\begin{align}
&\LRsigma{q}(\grp)=\setc{\uvel\in\LR{q}(\grp)^n}{\Div\uvel=0},\label{lt_densitylemmaLRsigmaCharacterization}\\
&\WSRsigma{2,1}{q}(\grp)=\setc{\uvel\in\WSR{2,1}{q}(\grp)^n}{\Div\uvel=0}.\label{lt_densitylemmaAnisoWSRsigmaCharacterization}
\end{align}
\end{lem}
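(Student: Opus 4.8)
The inclusion ``$\subseteq$'' in both \eqref{lt_densitylemmaLRsigmaCharacterization} and \eqref{lt_densitylemmaAnisoWSRsigmaCharacterization} is immediate: the distributional divergence $\Div:\LR{q}(\grp)^n\ra\TDR(\grp)$ is continuous, so the constraint $\Div\uvel=0$ passes to $\LR{q}$- (respectively $\WSR{2,1}{q}$-) limits; since every element of $\CRcisigma(\grp)$ is solenoidal, so is every element of its closure. The substance of the lemma is the reverse inclusion, for which I would proceed by approximation. Let $\uvel$ be a solenoidal field in $\LR{q}(\grp)^n$ (resp. $\WSR{2,1}{q}(\grp)^n$). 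First I would mollify in space and time: convolution with a mollifier on $\grp$ produces $\uvel_\epsilon\in\CRi(\grp)^n$ with $\uvel_\epsilon\ra\uvel$ in the relevant norm as $\epsilon\ra 0$, and since convolution commutes with the translation-invariant operator $\Div$, one still has $\Div\uvel_\epsilon=0$.

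The only remaining defect is the lack of compact support in the spatial variable, the time variable already ranging over the compact factor $\R/\per\Z$. So I would introduce a spatial cut-off $\cutoff_R(x):=\cutoff(x/R)$ with $\cutoff\equiv 1$ on $\set{\snorm{x}\leq 1}$ and $\supp\cutoff\subset\set{\snorm{x}\leq 2}$, and consider $\cutoff_R\uvel_\epsilon$. This has compact spatial support but is no longer solenoidal: $\Div(\cutoff_R\uvel_\epsilon)=\grad\cutoff_R\cdot\uvel_\epsilon=:g_R$, a smooth function supported in the annulus $A_R:=\set{R\leq\snorm{x}\leq 2R}$, with $\int_{A_R}g_R(\cdot,t)\,\dx=\int_{\R^n}\Div(\cutoff_R\uvel_\epsilon)(\cdot,t)\,\dx=0$ for each $t$. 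To restore the solenoidal property I would correct with the Bogovskii operator $\bogopr$ on $A_R$, applied in the spatial variable with $t$ as a parameter: setting $w_R(\cdot,t):=\bogopr g_R(\cdot,t)$ gives $\Div w_R=g_R$ with $w_R(\cdot,t)$ smooth and compactly supported in $A_R$. Because $\bogopr$ is a spatial integral operator independent of $t$, it commutes with $\partial_t$ and preserves smoothness in $t$, so $w_R\in\CRci(\grp)^n$ and $\cutoff_R\uvel_\epsilon-w_R\in\CRcisigma(\grp)$.

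The quantitative heart of the matter is that the Bogovskii estimates $\norm{\grad^{m+1}\bogopr f}_{\LR{q}(A_R)}\leq c_m\norm{\grad^m f}_{\LR{q}(A_R)}$ are scale-invariant under $x\mapsto x/R$, while $\norm{\bogopr f}_{\LR{q}(A_R)}\leq cR\norm{f}_{\LR{q}(A_R)}$ and $\snorm{\grad^j\cutoff_R}\leq c R^{-j}$. Combining these, and using that $\norm{\grad^j\uvel_\epsilon}_{\LR{q}(A_R)}\ra 0$ as $R\ra\infty$ since each $\grad^j\uvel_\epsilon\in\LR{q}(\grp)$, one gets $\norm{w_R}_q\ra 0$ in the $\LR{q}$-case, and in the $\WSR{2,1}{q}$-case additionally $\norm{\grad^2 w_R}_q\ra 0$ and $\norm{\partial_t w_R}_q\ra 0$ (the last because $\partial_t w_R=\bogopr(\partial_t g_R)$ with $\partial_t g_R=\grad\cutoff_R\cdot\partial_t\uvel_\epsilon$). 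Together with the elementary bound $\norm{(\cutoff_R-1)\uvel_\epsilon}\ra 0$ in the respective norm, this yields $\cutoff_R\uvel_\epsilon-w_R\ra\uvel_\epsilon$ as $R\ra\infty$; a diagonal argument over $\epsilon\ra 0$, $R\ra\infty$ then produces a sequence in $\CRcisigma(\grp)$ converging to $\uvel$.

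The main obstacle is this corrector step: one must ensure the Bogovskii solution $w_R$ is genuinely smooth and compactly supported, which forces working on the annulus $A_R$ (a finite union of domains star-shaped with respect to a ball) rather than on a single ball, and, crucially, that its norm vanishes in the limit. The latter rests on the scale-invariance of the Bogovskii estimates on the dyadic annuli together with the decay factors $R^{-j}$ from the cut-off; carrying out this bookkeeping for the full anisotropic norm $\norm{\cdot}_{2,1,q}$ — in particular handling the time derivative through the commutation $\partial_t\bogopr=\bogopr\partial_t$ — is where the real work lies.
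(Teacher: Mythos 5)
The paper does not actually prove this lemma: it records that the identities are well known when the underlying domain is $\R^n$, points to \cite[Chapter III.4]{galdi:book1} and \cite[Lemma 3.2.1]{habil}, and asserts that simple modifications carry the argument over to $\grp$. Your proposal is a correct, self-contained version of precisely the kind of argument being invoked, and the adaptation to $\grp$ is handled properly: mollification on the group preserves $\Div\uvel=0$ and converges in the relevant norm, the time variable needs no truncation because $\R/\per\Z$ is compact, and the loss of the solenoidal property caused by the spatial cut-off is repaired by a Bogovskii corrector on the dyadic annulus $A_R$. The quantitative core --- scale-invariance of $\norm{\grad^{m+1}\bogopr f}_q\leq c_m\norm{\grad^m f}_q$ under $x\mapsto x/R$, the factor $R$ in the zeroth-order bound compensated by the $R^{-1}$ from $\grad\cutoff_R$, the compatibility condition $\int_{A_R}g_R(\cdot,t)\,\dx=0$ for every $t$, and the commutation $\partial_t\bogopr=\bogopr\partial_t$ needed to control the anisotropic norm --- is all correctly identified, and I see no gap. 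Two small remarks. First, choose $\cutoff$ so that $\supp\grad\cutoff$ is compact in the \emph{open} annulus $\set{1<\snorm{x}<2}$; then $g_R\in\CRci(A_R)$ with zero mean and the $\WSRN{m}{q}$-versions of the Bogovskii estimates on a finite union of star-shaped domains apply without further comment. Second, the $\LR{q}$-identity \eqref{lt_densitylemmaLRsigmaCharacterization} on its own is often obtained instead by duality (Hahn--Banach together with the representation of functionals annihilating $\CRcisigma(\grp)$ as gradients); your direct construction is the better choice here because it proves \eqref{lt_densitylemmaLRsigmaCharacterization} and \eqref{lt_densitylemmaAnisoWSRsigmaCharacterization} by one and the same computation, which is what the anisotropic space $\WSR{2,1}{q}(\grp)$ requires.
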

The above identities are well-known if the underlying domain of the function spaces is, for example, $\R^n$. A proof can be found in \cite[Chapter III.4]{galdi:book1}. 
Simple modifications to this proof (see \cite[Lemma 3.2.1]{habil}) suffice to show the identities in the case where $\R^n$ is replaced with $\grp$. 

Next, we shall define the Helmholtz projection on the Lebesgue space $\LR{q}(\grp)^n$ of vector fields defined on $\grp$. 
For this purpose we employ the Fourier transform $\FT_\grp$ and define the Helmholtz projection as a Fourier multiplier:
\begin{defn}\label{lt_HelmholtzProjDef}
For $f\in\LR{2}(\grp)^n$ we define by
\begin{align}\label{lt_HelmholtzProjDefDef}
\hproj f := \iFT_\grp\Bb{\Bp{\idmatrix - \frac{\xi\otimes\xi}{\snorm{\xi}^2}} \ft{f}}
\end{align}
the Helmholtz projection. 
\end{defn}
It is not immediately clear from the definition of the Helmholtz projection via the Fourier multiplier in \eqref{lt_HelmholtzProjDefDef}
that $\hproj f$ is a real function if $f$ is real. This, however, is a simple consequence of the fact that the multiplier in question is even.  

Since the multiplier on the right-hand side in \eqref{lt_HelmholtzProjDefDef} is bounded, it is natural to initially define $\hproj$ on 
$\LR{2}(\grp)^n$. If namely $f\in\LR{2}(\grp)^n$, by Plancherel's theorem also $\hproj f\in\LR{2}(\grp)^n$.
We state in the following lemma that $\hproj$ can be extended to $\LR{q}(\grp)^n$, and that it is a projection with the desired properties of a Helmholtz projection.
\begin{lem}\label{lt_HelmholtzProjLem}
Let $q\in(1,\infty)$. Then $\hproj$ extends uniquely to a continuous projection $\hproj:\LR{q}(\grp)^n\ra\LR{q}(\grp)^n$. Moreover,
$\hproj\LR{q}(\grp)^n=\LRsigma{q}(\grp)$. 
\end{lem}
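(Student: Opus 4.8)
The plan is to establish this Helmholtz projection lemma by proving that the Fourier multiplier defining $\hproj$ is an $\LR{q}(\grp)$-bounded multiplier via the transference principle announced in the introduction, and then to verify the projection and range properties, the latter using the characterization from Lemma \ref{lt_densitylemma}. I would proceed in three stages.

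\emph{First}, the boundedness. The multiplier
\begin{align*}
M(\xi,k):=\idmatrix-\frac{\xi\otimes\xi}{\snorm{\xi}^2}
\end{align*}
is independent of the $k$-variable, so it is really a multiplier on $\R^n$ lifted to $\dualgrp=\R^n\times\Z$. The key observation is that $M$ coincides with the classical Helmholtz (Riesz-transform) multiplier on $\R^n$, which is well known to be an $\LR{q}(\R^n)$-Fourier multiplier for every $q\in(1,\infty)$. I would invoke the transference principle of \textsc{de Leeuw} \cite{Leeuw1965}, as described in the introduction, to transfer this $\R^n$-boundedness (more precisely, the $\R^{n+1}$-boundedness, since $M$ viewed as a function on $\R^{n+1}$ constant in the last variable is a bounded multiplier by the Mihlin or Lizorkin theorem) to an $\LR{q}(\grp)$-multiplier bound. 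This yields a constant $C=C(n,q)$ with $\norm{\hproj f}_q\le C\norm{f}_q$ for all $f\in\LR{2}(\grp)^n\cap\LR{q}(\grp)^n$. Since $\CRci(\grp)$ is dense in $\LR{q}(\grp)$ and is contained in this intersection, $\hproj$ extends uniquely to a bounded operator on $\LR{q}(\grp)^n$.

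\emph{Second}, the projection property. On $\LR{2}(\grp)^n$ the identity $\hproj^2=\hproj$ is immediate from $M^2=M$ (since $M$ is the pointwise orthogonal projection onto $\xi^\perp$) together with Plancherel. I would then extend $\hproj^2=\hproj$ from $\LR{2}\cap\LR{q}$ to all of $\LR{q}(\grp)^n$ by density and continuity of $\hproj$.

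\emph{Third}, the range characterization $\hproj\LR{q}(\grp)^n=\LRsigma{q}(\grp)$. Applying the divergence-calculus rule $\FT(\Div u)=i\,\xi\cdot\FT(u)$, one checks on the Fourier side that $\xi\cdot M(\xi,k)=0$, so $\Div(\hproj f)=0$ distributionally; hence by \eqref{lt_densitylemmaLRsigmaCharacterization} in Lemma \ref{lt_densitylemma} we get $\hproj\LR{q}(\grp)^n\subset\LRsigma{q}(\grp)$. For the reverse inclusion I would show $\hproj$ acts as the identity on $\LRsigma{q}(\grp)$: for $u\in\CRcisigma(\grp)$ we have $\Div u=0$, hence $\xi\cdot\ft{u}=0$, hence $M\ft{u}=\ft{u}$, so $\hproj u=u$; by density of $\CRcisigma(\grp)$ in $\LRsigma{q}(\grp)$ (its defining closure \eqref{lt_LRsigmadef}) and continuity of $\hproj$, this gives $\hproj u=u$ for all $u\in\LRsigma{q}(\grp)$, whence $\LRsigma{q}(\grp)\subset\hproj\LR{q}(\grp)^n$.

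\emph{The main obstacle} is the first stage: justifying the transference of the multiplier bound from the Euclidean setting to $\grp$. The multiplier $M$ is smooth and homogeneous of degree zero away from $\xi=0$, so it satisfies the Mihlin--Hörmander condition on $\R^n$ away from the origin; the delicate point is that the de Leeuw transference must be applied to the full group $\grp=\R^n\times\R/\per\Z$, whose dual is $\R^n\times\Z$, and one must confirm that a multiplier on $\dualgrp$ arising by restriction of a bounded $\R^{n+1}$-multiplier (constant in the last variable) inherits the operator-norm bound on $\LR{q}(\grp)$. Everything else — the $\LR{2}$ identities, the density arguments, and the divergence computation on the Fourier side — is routine once the transference tool is in hand.
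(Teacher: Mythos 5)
Your proof is correct, and your second and third stages (the projection identity via $M^2=M$ on $\LR{2}$ plus density, and the range characterization via $\Div\hproj f=0$, Lemma \ref{lt_densitylemma}, and $\hproj f=f$ on solenoidal fields) coincide with the paper's argument. The genuine difference is in how the $\LR{q}$-bound is obtained. The paper does not invoke transference here at all: it observes that $\FT_\grp=\FT_{\R^n}\circ\FT_{\R/\per\Z}$ and that, since the symbol is independent of $k$, one has
\begin{align*}
\iFT_\grp\Bb{\Bp{\idmatrix - \frac{\xi\otimes\xi}{\snorm{\xi}^2}} \ft{f}} =
\iFT_{\R^n} \Bb{\Bp{\idmatrix - \frac{\xi\otimes\xi}{\snorm{\xi}^2}} \FT_{\R^n}\bp{f}},
\end{align*}
that is, the classical Helmholtz projection acting in the spatial variable alone for (almost) every fixed $t$; the bound $\norm{\hproj f}_q\leq C\norm{f}_q$ then follows from the classical $\LR{q}(\R^n)$ result by integrating the slice-wise estimate in $t$. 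This is more elementary than your route and reserves the de Leeuw machinery for the genuinely space--time multiplier of Theorem \ref{lt_TPOseenMappingThm}. Your transference route also works, but with one repair: the symbol, lifted to $\R^{n+1}$ as a function constant in the last variable, is singular along the entire line $\setc{(\xi,\eta)}{\xi=0}$, so it does \emph{not} satisfy the Mihlin--H\"ormander condition on $\R^{n+1}$ (which measures decay in terms of $\snorm{(\xi,\eta)}$). You must instead use the Marcinkiewicz--Lizorkin condition, which does hold since each $\xi^\epsilon\partial_\xi^\epsilon$ of the symbol is homogeneous of degree zero in $\xi$ and the symbol is $\eta$-independent, or simply note that a bounded $\LR{q}(\R^n)$-multiplier extended trivially to $\R^{n+1}$ remains a bounded $\LR{q}(\R^{n+1})$-multiplier by Fubini. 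With that adjustment both approaches deliver the lemma; the paper's is shorter and self-contained at this point of the exposition.
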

\begin{proof}
It is easy to see that we can define on $\grp:=\R^n\times\R/\per\Z$ the partial Fourier transforms $\FT_{\R^n}:\TDR(\grp)\ra\TDR(\grp)$ and $\FT_{\R/\per\Z}:\TDR(\grp)\ra\TDR(\grp)$ in the canonical way, and that $\FT_\grp=\FT_{\R^n}\circ\FT_{\R/\per\Z}$. Consequently, 
for $f\in\LR{2}(\grp)^n\cap\LR{q}(\grp)^n$
\begin{align*}
\iFT_\grp\Bb{\Bp{\idmatrix - \frac{\xi\otimes\xi}{\snorm{\xi}^2}} \ft{f}} = 
\iFT_{\R^n} \Bb{\Bp{\idmatrix - \frac{\xi\otimes\xi}{\snorm{\xi}^2}} \FT_{\R^n}\bp{f}}.
\end{align*}
From the boundedness of the classical Helmholtz projection on $\LR{q}(\R^n)^n$, which one recognizes on the right-hand side above, it therefore follows that $\norm{\hproj f}_{q}\leq \norm{f}_q$. Thus, $\hproj$ extends uniquely to a continuous map $\hproj:\LR{q}(\grp)^n\ra\LR{q}(\grp)^n$. One readily verifies that $\hproj$ is a projection,
and that $\Div\hproj f=0$. By Lemma \ref{lt_densitylemma}, $\hproj\LR{q}(\grp)^n\subset\LRsigma{q}(\grp)$ follows. On the other hand, 
since $\Div f=0$ implies $\xi_j\ft{f}_j=0$, we have $\hproj f=f$ for all $f\in\LRsigma{q}(\grp)$. Hence we conclude $\hproj\LR{q}(\grp)^n=\LRsigma{q}(\grp)$. 
\end{proof}

Since $\hproj:\LR{q}(\grp)^n\ra\LR{q}(\grp)^n$ is a continuous projection, it decomposes $\LR{q}(\grp)$ into a direct sum 
\begin{align}\label{lt_HelmholtzDecomp}
\LR{q}(\grp)=\LRsigma{q}(\grp)\oplus \gradspace{q}(\grp)
\end{align}
of closed subspaces with
\begin{align}\label{lt_gradspacedef}
\gradspace{q}(\grp) := \bp{\id-\hproj}\LR{q}(\grp)^n.
\end{align}

\subsection{Time-averaging}\label{lt_timeaveragingSection}

We shall now introduce the time-averaging projection \eqref{intro_defofproj} into the setting of vector fields defined on the group $\grp$. 
\begin{defn} We let 
\begin{align*} 
&\proj:\CRcisigma(\grp)\ra\CRcisigma(\grp),\quad \proj f(x,t):=\iper\int_0^\per f(x,s)\,\ds,\\
&\projcompl:\CRcisigma(\grp)\ra\CRcisigma(\grp),\quad \projcompl:=\id-\proj.
\end{align*}
\end{defn}

\begin{lem}\label{lt_projproplemma}
Let $q\in(1,\infty)$. The operator $\proj$ extends uniquely to a continuous projection
$\proj:\LRsigma{q}(\grp)\ra\LRsigma{q}(\grp)$ and  
$\proj:\WSRsigma{2,1}{q}(\grp)\ra\WSRsigma{2,1}{q}(\grp)$.
The same is true for $\projcompl$.
\end{lem}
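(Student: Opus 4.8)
The plan is to realize $\proj$ as the Fourier multiplier that projects onto the zeroth time-frequency mode $k=0$ and to verify its boundedness by an elementary one-dimensional estimate; once that is in hand, the preservation of the solenoidal and Sobolev structure follows automatically because $\proj$ commutes with spatial derivatives.

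First I would establish that $\proj$ is bounded on the full Lebesgue space. Writing $\grp=\R^n\times\Torus$ and recalling the factorization $\FT_\grp=\FT_{\R^n}\circ\FT_{\R/\per\Z}$ used in the proof of Lemma \ref{lt_HelmholtzProjLem}, one checks that $\proj$ acts only in the time variable and corresponds, on the time torus $\Torus$, to the Fourier multiplier $\kroneckerdelta_{0,k}$ in the dual variable $k\in\Z$; equivalently it is the averaging operator onto the constant mode. Treating $x$ as a parameter and applying Jensen's inequality with respect to the probability measure $\iper\,\dt$ gives, for $f\in\CRcisigma(\grp)$,
\[
  \iper\int_0^\per \snorm{\proj f(x,t)}^q\,\dt
  = \snormL{\iper\int_0^\per f(x,s)\,\ds}^q
  \leq \iper\int_0^\per \snorm{f(x,s)}^q\,\ds,
\]
and integrating in $x$ by Fubini yields $\norm{\proj f}_q\leq\norm{f}_q$. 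Since $\projcompl=\id-\proj$, the same bound holds for $\projcompl$.

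Next I would exploit that $\proj$ commutes with every spatial differential operator: because $\partial_x^\alpha$ acts only in $x$ while $\proj$ averages only in $t$, one has $\partial_x^\alpha\proj f=\proj\partial_x^\alpha f$ for $f\in\CRcisigma(\grp)$ (and, by duality, for distributions). In particular $\Div\proj f=\proj\Div f=0$, and the average retains compact support since $\Torus$ is compact, so $\proj$ maps $\CRcisigma(\grp)$ into itself. For the Sobolev estimate I would note that $\proj f$ is independent of $t$, whence $\partial_t\proj f=0$, while each spatial derivative obeys $\norm{\partial_x^\alpha\proj f}_q=\norm{\proj\partial_x^\alpha f}_q\leq\norm{\partial_x^\alpha f}_q$ by the $\LR{q}$-bound above. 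Summing over $\snorm{\alpha}\leq 2$ and $\snorm{\beta}\leq 1$ gives $\norm{\proj f}_{2,1,q}\leq\norm{f}_{2,1,q}$, and likewise for $\projcompl$.

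Finally, since $\CRcisigma(\grp)$ is dense in $\LRsigma{q}(\grp)$ and in $\WSRsigma{2,1}{q}(\grp)$ by the definitions \eqref{lt_LRsigmadef} and \eqref{lt_AnisoWSRsigmadef}, the two estimates let $\proj$ extend uniquely by continuity to bounded operators on each space; the extensions are consistent on the intersection by uniqueness, and their ranges lie in the respective closed spaces because $\proj\CRcisigma(\grp)\subset\CRcisigma(\grp)$. The identity $\proj^2=\proj$ holds on the dense subspace and hence everywhere by continuity, and the corresponding statements for $\projcompl$ follow from $\projcompl=\id-\proj$. The only point demanding genuine care is the $\LR{q}$-boundedness of the averaging operator; everything else reduces to the commutation of $\proj$ with spatial derivatives together with the density of $\CRcisigma(\grp)$.
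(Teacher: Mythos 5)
Your proposal is correct and follows essentially the same route as the paper: both prove the contraction estimate $\norm{\proj f}_q\leq\norm{f}_q$ directly on the dense subspace $\CRcisigma(\grp)$ (the paper via Minkowski's integral inequality and H\"older, you via Jensen — these are interchangeable here), observe that the same bound applies to derivatives for the $\WSR{2,1}{q}$-norm, and conclude by density. The Fourier-multiplier framing at the start of your argument is not actually used in the estimate and is harmless.
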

\begin{proof}
Clearly, $\proj$ is a projection. 
Employing first Minkowski's integral inequality and then H\"older's inequality, we estimate 
\begin{align*}
\norm{\proj f}_{\LR{q}} &= \Bp{\iper\int_0^\per\int_{\R^n}\bigg|\iper\int_0^\per f(x,s)\,\ds\bigg|^q\,\dx\dt}^{1/q}\\
&= \Bp{\int_{\R^n}\bigg|\iper\int_0^\per f (x,s)\,\ds\bigg|^q\,\dx}^{1/q}\\
&\leq \iper\int_0^\per \Bp{ \int_{\R^n} \snorm{f(x,s)}^q\,\dx}^{1/q}\,\ds \leq \norm{f}_{\LR{q}}.
\end{align*}
Thus, by density $\proj$ extends uniquely to a continuous projection $\proj:\LRsigma{q}(\grp)\ra\LRsigma{q}(\grp)$. 
Estimating derivatives of $\proj f$ in the same manner, we find that $\proj$ is also bounded with respect to the $\WSR{2,1}{q}(\grp)$-norm. Consequently,
$\proj$ extends uniquely to a continuous projection $\proj:\WSRsigma{2,1}{q}(\grp)\ra\WSRsigma{2,1}{q}(\grp)$. 
It follows trivially that the same is true for $\projcompl$.  
\end{proof}

We use $\proj$ and $\projcompl$ to decompose $\LRsigma{q}(\grp)$ and $\WSRsigma{2,1}{q}(\grp)$ into direct sums of functions that are time-independent, \textit{i.e.}, steady states, and functions that have vanishing time-average. Put 
\begin{align}
&\LRsigmacompl{q}(\grp):=\projcompl\LRsigma{q}(\grp),\\
&\WSRsigmacompl{2,1}{q}(\grp):=\projcompl\WSRsigma{2,1}{q}(\grp).\label{lt_DefOfWSRsigmacomplo}
\end{align}
Identifying $\R^n$ as a subdomain of $\grp$, we observe:
\begin{lem}\label{lt_decomplemma}
Let $q\in(1,\infty)$. The projection $\proj$ induces the decompositions
\begin{align}
&\LRsigma{q}(\grp)=\LRsigma{q}(\R^n)\oplus\LRsigmacompl{q}(\grp),\label{lt_decomplemmaLRsigmadecompl}\\
&\WSRsigma{2,1}{q}(\grp)=\WSRsigma{2}{q}(\R^n)\oplus\WSRsigmacompl{2,1}{q}(\grp).\label{lt_decomplemmaAnIsoWSRdecompl}
\end{align}
\end{lem}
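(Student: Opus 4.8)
The plan is to deduce both decompositions from the abstract fact that a continuous projection on a Banach space splits it into a direct sum, and then to identify the range of $\proj$ with the corresponding steady-state space on $\R^n$. By Lemma \ref{lt_projproplemma}, $\proj$ is a continuous projection on each of $\LRsigma{q}(\grp)$ and $\WSRsigma{2,1}{q}(\grp)$, so $\projcompl=\id-\proj$ is as well. For any continuous projection $\Pmap$ on a Banach space $\xspacegeneric$ one has $\xspacegeneric=\Pmap\xspacegeneric\oplus(\id-\Pmap)\xspacegeneric$: the two subspaces span $\xspacegeneric$ since $u=\Pmap u+(\id-\Pmap)u$, and their intersection is trivial because $u=\Pmap v=(\id-\Pmap)w$ forces $u=\Pmap u=\Pmap(\id-\Pmap)w=0$. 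Applying this with $\xspacegeneric=\LRsigma{q}(\grp)$ and $\Pmap=\proj$ immediately yields $\LRsigma{q}(\grp)=\proj\LRsigma{q}(\grp)\oplus\projcompl\LRsigma{q}(\grp)$, where by definition $\projcompl\LRsigma{q}(\grp)=\LRsigmacompl{q}(\grp)$; the analogous splitting holds for $\WSRsigma{2,1}{q}(\grp)$ with $\projcompl\WSRsigma{2,1}{q}(\grp)=\WSRsigmacompl{2,1}{q}(\grp)$.

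It then remains to identify $\proj\LRsigma{q}(\grp)$ with $\LRsigma{q}(\R^n)$. Since $\proj f(x,t)=\iper\int_0^\per f(x,s)\,\ds$ is manifestly independent of $t$, and since a time-independent $f$ satisfies $\proj f=f$, the range $\proj\LRsigma{q}(\grp)$ is exactly the set of time-independent fields in $\LRsigma{q}(\grp)$. The next step is to observe that the trivial, constant-in-time extension identifies such fields with elements of $\LRsigma{q}(\R^n)$: the normalization of the Haar measure gives $\norm{f}_{\LR{q}(\grp)}=\norm{f}_{\LR{q}(\R^n)}$ for time-independent $f$, so the extension is isometric onto its image, and because $\Div$ acts only in the spatial variable the solenoidality constraint is preserved. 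Invoking Lemma \ref{lt_densitylemma} on $\grp$ together with its classical counterpart on $\R^n$, both sides reduce to $\setc{f\in\LR{q}}{\Div f=0}$, so the time-independent members of $\LRsigma{q}(\grp)$ are precisely $\LRsigma{q}(\R^n)$. This establishes \eqref{lt_decomplemmaLRsigmadecompl}.

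For the Sobolev decomposition I would argue in the same way, with one extra observation: for a time-independent field $f$ one has $\partial_t f=0$, so the anisotropic norm $\norm{f}_{2,1,q}$ collapses to $\bp{\sum_{\snorm{\alpha}\leq2}\norm{\partial_x^\alpha f}_q^q}^{1/q}$, which under the trivial extension is exactly the $\WSR{2}{q}(\R^n)$-norm. Hence $\proj\WSRsigma{2,1}{q}(\grp)$ is isometrically identified with the time-independent solenoidal $W^{2,q}$-fields, that is, with $\WSRsigma{2}{q}(\R^n)$, which yields \eqref{lt_decomplemmaAnIsoWSRdecompl}.

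The only point requiring genuine care, rather than routine verification, is this identification of the range of $\proj$ with the steady-state space on $\R^n$: one must match the Haar-measure normalization so that the embedding of time-independent functions is isometric, and one must appeal to the characterizations of Lemma \ref{lt_densitylemma} to see that being solenoidal in $\LR{q}$ means the same thing on $\grp$ and on $\R^n$. Everything else is the standard splitting induced by a bounded projection.
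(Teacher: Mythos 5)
Your proposal is correct and follows essentially the same route as the paper: the abstract splitting induced by the continuous projection from Lemma \ref{lt_projproplemma}, followed by the identification of $\proj\LRsigma{q}(\grp)$ with $\LRsigma{q}(\R^n)$ via the observation that $\proj f$ is time-independent and the Haar-measure normalization makes the identification isometric. The paper's proof is terser (it leaves the solenoidality matching and the Sobolev case implicit), but the substance is identical.
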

\begin{proof}
Since, by Lemma \ref{lt_projproplemma}, $\proj:\LRsigma{q}(\grp)\ra\LRsigma{q}(\grp)$ is a (continuous) projection, it follows that
$\LR{q}(\grp)=\proj\LR{q}(\grp)\oplus\projcompl\LR{q}(\grp)$.
Consequently, to show \eqref{lt_decomplemmaLRsigmadecompl} we only need to verify that $\proj\LRsigma{q}(\grp)=\LRsigma{q}(\R^n)$. 
This, however, is an easy consequence of the fact that $\proj f$ is 
independent on $t$ and thus $\norm{\proj f}_{\LR{q}(\R^n)}=\norm{\proj f}_{\LR{q}(\grp)}$. The decomposition \eqref{lt_decomplemmaAnIsoWSRdecompl} follows analogously.
\end{proof}

Next, we compute the symbols of the projections $\proj$ and $\projcompl$ with respect to the Fourier transform on $\grp$. 
\begin{lem}\label{lt_projsymbollem} 
For $f\in\SR(\grp)$
\begin{align}
&\proj f = \iFT_\grp\bb{\projsymbol\cdot \ft{f}},\label{lt_projsymbollemproj}\\
&\projcompl f = \iFT_\grp\bb{(1-\projsymbol)\cdot \ft{f}}\label{lt_projsymbollemprojcompl}
\end{align}
with
\begin{align*}
\projsymbol:\dualgrp\ra\CNumbers,\quad
\projsymbol(\xi,k):=
\begin{pdeq}
&1 && \tif k=0,\\
&0 && \tif k\neq0.
\end{pdeq}
\end{align*}
\end{lem}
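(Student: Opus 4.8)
The plan is to verify the two multiplier identities directly by computing the Fourier transform of $\proj f$ and $\projcompl f$ and matching the result against multiplication by $\projsymbol$ and $(1-\projsymbol)$. Since the symbol $\projsymbol$ acts only in the $k$-variable (the dual of the time direction $\R/\per\Z$), the essential computation is one-dimensional in time; the spatial variable $x$ and its dual $\xi$ play no role and merely come along for the ride.

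First I would compute $\ft{\proj f}(\xi,k)$ explicitly from the definitions. Using the definition of $\proj$ from the statement and the Fourier transform $\FT_\grp$ on $\grp$, I write
\begin{align*}
\ft{\proj f}(\xi,k)=\iper\int_0^\per\int_{\R^n}\Bp{\iper\int_0^\per f(x,s)\,\ds}\e^{-ix\cdot\xi-ik\perf t}\,\dx\dt.
\end{align*}
The inner average over $s$ produces a function independent of $t$, so the $t$-integral decouples and reduces to $\iper\int_0^\per\e^{-ik\perf t}\,\dt$. This latter integral is precisely the orthogonality relation for the characters $t\mapsto\e^{ik\perf t}$ on $\R/\per\Z$: it equals $1$ when $k=0$ and $0$ when $k\neq0$. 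Hence the whole expression collapses to $\projsymbol(\xi,k)\,\ft{f}(\xi,k)$, where the surviving spatial integral $\iper\int_0^\per\int_{\R^n}f(x,s)\,\e^{-ix\cdot\xi}\,\dx\ds$ is exactly $\ft{f}(\xi,0)=\ft{f}(\xi,k)$ in the case $k=0$. Applying $\iFT_\grp$ to both sides yields \eqref{lt_projsymbollemproj}.

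The identity \eqref{lt_projsymbollemprojcompl} for $\projcompl$ then follows immediately from $\projcompl=\id-\proj$ and the linearity of $\FT_\grp$: since the identity operator corresponds to multiplication by the constant symbol $1$, we get $\ft{\projcompl f}=(1-\projsymbol)\,\ft{f}$. The only point requiring a modicum of care is the interchange of the order of integration needed to isolate the $t$-integral, but for $f\in\SR(\grp)$ this is unproblematic by absolute integrability, so there is no real obstacle here; the lemma is essentially a bookkeeping computation confirming that time-averaging corresponds to projecting onto the zeroth Fourier mode in time.
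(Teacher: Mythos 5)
Your proposal is correct and follows essentially the same route as the paper: compute $\FT_\grp\bb{\proj f}(\xi,k)$ directly, use the orthogonality $\iper\int_0^\per\e^{-ik\perf t}\,\dt=\projsymbol(\xi,k)$ to collapse the $t$-integral, identify the remainder with $\ft{f}(\xi,0)=\projsymbol(\xi,k)\ft{f}(\xi,k)$, and obtain the statement for $\projcompl$ by linearity. No gaps.
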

\begin{proof}
We simply observe that
\begin{align*}
\FT_\grp\bb{\proj f}(\xi,k) &= \iper\int_0^\per\int_{\R^n}\iper\int_0^\per f(x,s)\,\ds\,e^{-ix\cdot\xi-i\perf k t}\,\dx\dt\\
&= \projsymbol(\xi,k) \int_{\R^n}\iper\int_0^\per f(x,s)\,\ds\,e^{-ix\cdot\xi}\,\dx\\
&= \projsymbol(\xi,k)\, \ft{f}(\xi,0) = \projsymbol(\xi,k)\, \ft{f}(\xi,k).
\end{align*}
\end{proof}

\subsection{Reformulation}

Since the topology and differentiable structure on $\grp$ is inherited from $\R^n\times\R$, we obtain the following equivalent 
formulation of the time-periodic problem \eqref{intro_linnssystem}, including the periodicity conditions \eqref{intro_timeperiodicsolution}--\eqref{intro_timeperiodicdata}, as a system over $\grp$-defined vector fields:
\begin{align}\label{lt_linnspastbodywholespaceregrpformulation}
\begin{pdeq}
&\partial_t\uvel -\Delta\uvel -\rey\partial_1\uvel + \grad\upres = f && \tin\grp,\\
&\Div\uvel =0 && \tin\grp
\end{pdeq}
\end{align} 
with unknowns $\uvel:\grp\ra\R^n$ and $\upres:\grp\ra\R$, and data $f:\grp\ra\R^n$.
Observe that in this formulation the periodicity conditions are 
not needed anymore. 
Indeed, all functions defined on $\grp$ are by definition $\per$-time-periodic.

Based on the new formulation above, we also obtain the following new formulation of Theorem \ref{mainresult_tpmaxregthm}:

\begin{thm}\label{lt_TPOseenMappingThm}
Let $q\in(1,\infty)$. Put $\ALTP:=\partial_t -\Delta -\rey\partial_1$.
Then 
\begin{align*}
\ALTP:\ \WSRsigmacompl{2,1}{q}(\grp)\ra\LRsigmacompl{q}(\grp)
\end{align*}
homeomorphically. Moreover
\begin{align}\label{lt_TPOseenMappingThmInversebound}
\norm{\ALTPinverse}\leq \Cc{C}\,\polynomial(\rey,\per),
\end{align}
where $\Cclast{C}=\Cclast{C}(q,n)$ and $\polynomial(\rey,\per)$ is a polynomial in $\rey$ and $\per$.
\end{thm}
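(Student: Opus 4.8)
The plan is to invert $\ALTP$ explicitly as a Fourier multiplier operator on $\grp$ and then to establish the required $\LR{q}$-bounds by transferring the analysis to $\R^{n+1}$. Applying $\FT_\grp$ to $\ALTP\uvel=f$ and using the differentiation rule from Section \ref{lt_differentiablestructuresubsection}, the operator $\ALTP$ becomes multiplication by the symbol
\[
\sigma(\xi,k):=\snorm{\xi}^2-i\rey\xi_1+i\perf k .
\]
For $k\neq 0$ one has $\sigma(\xi,k)\neq 0$: if $\xi\neq 0$ then $\realpart\sigma=\snorm{\xi}^2>0$, while if $\xi=0$ then $\sigma=i\perf k\neq 0$. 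Since every $f\in\LRsigmacompl{q}(\grp)$ satisfies $\ft{f}(\xi,0)=0$, this yields uniqueness at once: if $\ALTP\uvel=0$ with $\uvel\in\WSRsigmacompl{2,1}{q}(\grp)$, then $\sigma\,\ft{\uvel}=0$ forces $\ft{\uvel}=0$. It also suggests the candidate solution $\uvel:=\iFT_\grp\bb{(1-\projsymbol)\,\sigma^{-1}\,\ft{f}}$. As the symbol is scalar, this operator commutes with $\hproj$ and with $\proj,\projcompl$; hence $\Div\uvel=0$ and $\projcompl\uvel=\uvel$, so once $\uvel$ is shown to lie in $\WSR{2,1}{q}(\grp)$ it automatically lies in $\WSRsigmacompl{2,1}{q}(\grp)$. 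The entire problem thus reduces to the multiplier bounds placing $\uvel$ in this space.

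By the definition of $\norm{\cdot}_{2,1,q}$ and the differentiation rule, $\uvel\in\WSR{2,1}{q}(\grp)$ together with the estimate \eqref{lt_TPOseenMappingThmInversebound} will follow once we show that, for every $(\alpha,\beta)\in\N_0^n\times\N_0$ with $\snorm{\alpha}\leq 2$ and $\beta\leq 1$, the function
\[
\mmultiplier_{\alpha,\beta}(\xi,k):=\frac{(1-\projsymbol(\xi,k))\,\xi^\alpha\,(\perf k)^\beta}{\snorm{\xi}^2-i\rey\xi_1+i\perf k}
\]
is an $\LR{q}(\grp)$-Fourier multiplier, with operator norm bounded by a polynomial in $\rey,\per$ times a constant depending only on $q,n$. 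Summing over the finitely many admissible pairs $(\alpha,\beta)$ then produces \eqref{lt_TPOseenMappingThmInversebound}.

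To obtain these bounds I would invoke the transference principle of \textsc{de Leeuw} \cite{Leeuw1965}, identifying $\dualgrp=\R^n\times\Z$ with the closed subgroup $\R^n\times\perf\Z$ of $\R^{n+1}$ via $k\mapsto\perf k$. It then suffices to exhibit a \emph{continuous} function on $\R^{n+1}$ that restricts to $\mmultiplier_{\alpha,\beta}$ on this subgroup and is an $\LR{q}(\R^{n+1})$-multiplier, since its transference norm controls the $\LR{q}(\grp)$-multiplier norm up to a factor depending only on $q$. Because $\mmultiplier_{\alpha,\beta}$ vanishes for $k=0$ and every nonzero lattice point satisfies $\snorm{\perf k}\geq\perf$, I would remove the singularity at the origin with a cut-off $\cutoff\in\CRi(\R)$ supported in $\set{\snorm{\eta}\geq\perf/2}$ and equal to $1$ on $\set{\snorm{\eta}\geq\perf}$, and set
\[
\Mmultiplier_{\alpha,\beta}(\xi,\eta):=\frac{\cutoff(\eta)\,\xi^\alpha\,\eta^\beta}{\snorm{\xi}^2-i\rey\xi_1+i\eta}.
\]
On $\supp\cutoff$ the denominator does not vanish, so $\Mmultiplier_{\alpha,\beta}$ is smooth and agrees with $\mmultiplier_{\alpha,\beta}$ at every point $(\xi,\perf k)$. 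Its $\LR{q}(\R^{n+1})$-bound I would derive from the Marcinkiewicz (equivalently Lizorkin) multiplier theorem, which reduces the matter to estimating $\snorm{\xi^\gamma\eta^\delta\,\partial_\xi^\gamma\partial_\eta^\delta\Mmultiplier_{\alpha,\beta}}$ for $\gamma\in\set{0,1}^n$ and $\delta\in\set{0,1}$.

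The main obstacle is precisely this last verification, carried out with explicit control of the dependence on $\rey$ and $\per$. The natural parabolic scaling $\eta\sim\snorm{\xi}^2$ makes the top-order symbols ($\snorm{\alpha}+2\beta=2$) homogeneous of degree zero, and the elementary inequalities $\snorm{\xi}^2\leq\snorm{\sigma}$ and $\snorm{\eta-\rey\xi_1}\leq\snorm{\sigma}$ (with $\sigma$ the denominator above) give pointwise control; for instance $\snorm{\xi^\alpha/\sigma}\leq 1$ when $\snorm{\alpha}=2$. The delicate region is where $\snorm{\sigma}$ is small, namely $\xi$ near $0$ with $\eta\approx\rey\xi_1$: there the constraint $\snorm{\eta}\geq\perf/2$ forces $\snorm{\xi}\gtrsim\perf/\snorm{\rey}$, which converts the apparent singularities into bounds that are polynomial in $\rey,\per$ (e.g. $\snorm{\eta/\sigma}\lesssim\rey^2\per$). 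The derivatives of $\cutoff$, living at scale $\snorm{\eta}\sim\perf$, and those falling on the Oseen term $-i\rey\xi_1$ contribute further polynomial factors but no additional singularities. Assembling all Marcinkiewicz seminorms and transferring back yields \eqref{lt_TPOseenMappingThmInversebound}. Bijectivity then follows because $\ALTP$ maps $\WSRsigmacompl{2,1}{q}(\grp)$ boundedly into $\LRsigmacompl{q}(\grp)$ (it preserves both solenoidality and the vanishing of the time-average, the latter since $\proj\,\partial_t\uvel=0$ by periodicity), while the multiplier operator constructed above is a two-sided inverse, so $\ALTP$ is a homeomorphism.
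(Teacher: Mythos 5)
Your proposal is correct and follows essentially the same route as the paper: invert $\ALTP$ via the Fourier multiplier $(1-\projsymbol)/(\snorm{\xi}^2+i(\perf k-\rey\xi_1))$ on $\dualgrp$, remove the singularity at the origin using the vanishing of the $k=0$ modes together with a cut-off in the dual time variable, transfer to $\R^{n+1}$ via the homomorphism $(\xi,k)\mapsto(\xi,\perf k)$ and the de Leeuw/Edwards--Gaudry principle, and verify the Marcinkiewicz conditions with polynomial dependence on $\rey,\per$ for the symbols corresponding to $\uvel$ and its derivatives. The only differences are presentational (you treat all derivative multipliers $\mmultiplier_{\alpha,\beta}$ uniformly where the paper repeats the base argument, and the paper first works on the dense subspace $\projcompl\CRcisigma(\grp)$ before extending by density), so no further comment is needed.
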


The main challenge is now to prove Theorem \ref{lt_TPOseenMappingThm}. This will be done in the next section. At this point we just emphasize the crucial advantage obtained by formulating the problem in a group setting, which is the ability by means of the Fourier transform $\FT_\grp$ 
to express the solution $\uvel$ to \eqref{lt_linnspastbodywholespaceregrpformulation} in terms of a Fourier multiplier. 
If we namely apply the Fourier transform $\FT_\grp$ on both sides of the equations in \eqref{lt_linnspastbodywholespaceregrpformulation}, we obtain the equivalent
system\footnote{We make use of the Einstein summation convention and
implicitly sum over all repeated indices.}
\begin{align*}
\begin{pdeq}
&(i\perf k) \uvelft + \snorm{\xi}^2\uvelft -\rey i\xi_1 \uvelft + i\,\upresft\,\xi = \ft{f} && \tin\dualgrp,\\
&\xi_j\uvelft_j =0 && \tin\dualgrp.
\end{pdeq}
\end{align*} 
Dot-multiplying the first equation with $\xi$, we obtain the relation $i\, \upresft\snorm{\xi}^2 = \xi_j\ft{f}_j$ 
and thus
\begin{align*}
\Bp{(i\perf k) + \snorm{\xi}^2 -\rey i\xi_1} \uvelft = \Bp{\idmatrix-\frac{\xi\otimes\xi}{\snorm{\xi}^2}}\ft{f}.
\end{align*}
Formally at least, we can therefore deduce
\begin{align}\label{lt_fouriermultiplierrepresentation}
\uvel = \iFT_\grp\Bb{ \frac{1}{(i\perf k) + \snorm{\xi}^2 -\rey i\xi_1}\ft{\hproj f}}.
\end{align}
This representation formula for the solution $\uvel$ shall play a central role in the proof of Theorem \ref{lt_TPOseenMappingThm} presented in the next section.

\section{Proof of main theorems}

The main tool in the proof of Theorem \ref{lt_TPOseenMappingThm} is the following theorem on transference of Fourier multipliers, 
which enables us to ``transfer'' multipliers from one group setting into another.
The theorem is originally due to \textsc{de Leeuw} \cite{Leeuw1965}, who established the transference principle between the torus group and $\R$. 
The more general version below is due to \textsc{Edwards} and \textsc{Gaudry} \cite[Theorem B.2.1]{EdwardsGaudryBook}.

\begin{thm}\label{lt_transferenceofmultipliersThm}
Let $\grp$ and $\grpH$ be locally compact abelian groups. Moreover, let 
\begin{align*}
\Phi:\dualgrp\ra\dualgrpH
\end{align*}
be a continuous homomorphism and $q\in[1,\infty]$. Assume that $m\in\LR{\infty}(\dualgrpH;\CNumbers)$ is a continuous $\LR{q}$-multiplier, that is, there is 
a constant $\Cc[lt_transferenceofmultipliersThmConst1]{C}$ such that 
\begin{align*}
\forall f \in\LR{2}(\grpH)\cap\LR{q}(\grpH):\ \norm{\iFT_\grpH\bb{m\cdot \FT_\grpH(f)}}_q\leq
\const{lt_transferenceofmultipliersThmConst1}\norm{f}_q.
\end{align*}
Then $m\circ\Phi\in\LR{\infty}(\dualgrp;\CNumbers)$ is also an $\LR{q}$-multiplier with
\begin{align*}
\forall f \in\LR{2}(\grp)\cap\LR{q}(\grp):\ \norm{\iFT_\grp\bb{m\circ\Phi\cdot \FT_\grp(f)}}_q\leq
\const{lt_transferenceofmultipliersThmConst1}\norm{f}_q. 
\end{align*}
\end{thm}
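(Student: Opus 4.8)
The plan is to prove the theorem by the classical transference (de Leeuw) argument, reducing the assertion about the pulled-back symbol $m\circ\Phi$ on $\dualgrp$ to the hypothesis about $m$ on $\dualgrpH$ via an approximate-identity limit. The geometric link is supplied by Pontryagin duality: the continuous homomorphism $\Phi:\dualgrp\ra\dualgrpH$ admits a dual continuous homomorphism $\phi:\grpH\ra\grp$, characterized by $\langle\Phi(\gamma),h\rangle=\langle\gamma,\phi(h)\rangle$ for all $\gamma\in\dualgrp$ and $h\in\grpH$. In the case of interest, where $\grp=\R^n\times\R/\per\Z$ and $\grpH=\R^{n+1}$, this $\phi$ is simply the covering map $(x,s)\mapsto(x,\pi(s))$, so that pulling a function on $\grp$ back along $\phi$ yields its $\per$-periodic extension.

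First I would fix $f\in\SR(\grp)$, which suffices by density of $\SR(\grp)$ in $\LR{2}(\grp)\cap\LR{q}(\grp)$, and transport it to $\grpH$. Writing $f\circ\phi$ for the pullback (a function on $\grpH$ that is constant along $\ker\phi$, hence not integrable), I multiply by an approximate identity $g_\epsilon$ on $\grpH$, for instance a dilated Gaussian with $g_\epsilon\ra 1$ and $\widehat{g_\epsilon}$ an approximate identity on $\dualgrpH$ concentrating at the origin, to obtain $F_\epsilon:=(f\circ\phi)\,g_\epsilon\in\LR{2}(\grpH)\cap\LR{q}(\grpH)$.

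The heart of the argument is a Fourier-side computation. The transform of the pullback $\widehat{f\circ\phi}$ is a discrete measure carried by the image $\Phi(\dualgrp)\subset\dualgrpH$, assigning the coefficient $\widehat{f}(\gamma)$ to the point $\Phi(\gamma)$; convolving with $\widehat{g_\epsilon}$ spreads each mass into a small bump centred at $\Phi(\gamma)$, so $\widehat{F_\epsilon}$ concentrates near $\Phi(\dualgrp)$. Applying $m$ and using its continuity, $m(\eta)\approx m(\Phi(\gamma))=(m\circ\Phi)(\gamma)$ on the bump around $\Phi(\gamma)$, whence $\iFT_{\grpH}\bb{m\,\widehat{F_\epsilon}}$ equals, up to an error tending to $0$, the function $(u\circ\phi)\,g_\epsilon$, where $u:=\iFT_\grp\bb{(m\circ\Phi)\,\widehat{f}}$ is the target on $\grp$. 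Feeding this into the hypothesis $\norm{\iFT_{\grpH}\bb{m\,\widehat{F_\epsilon}}}_q\leq\const{lt_transferenceofmultipliersThmConst1}\norm{F_\epsilon}_q$ and letting $\epsilon\ra 0$, during which $\norm{(v\circ\phi)\,g_\epsilon}_{\LR{q}(\grpH)}/c_\epsilon\ra\norm{v}_{\LR{q}(\grp)}$ for every fixed $v\in\SR(\grp)$ with a \emph{common} normalizing constant $c_\epsilon$ that cancels, produces exactly $\norm{u}_q\leq\const{lt_transferenceofmultipliersThmConst1}\norm{f}_q$. A final density argument extends the bound to all $f\in\LR{2}(\grp)\cap\LR{q}(\grp)$.

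The main obstacle is making this third step rigorous in full locally-compact-abelian generality: since $\phi$ is in general neither injective nor surjective (in the application its kernel is the lattice $\set{0}\times\per\Z$), functions cannot be transported back and forth freely, and both the identification of $\widehat{f\circ\phi}$ as a measure on $\Phi(\dualgrp)$ and the equidistribution limit $\norm{\cdot}_{\LR{q}(\grpH)}/c_\epsilon\ra\norm{\cdot}_{\LR{q}(\grp)}$ require the structure theory of such groups, or, in our concrete situation, a direct Riemann-sum computation over the period together with the prescribed Haar normalizations. Keeping the transferred constant \emph{equal} to $\const{lt_transferenceofmultipliersThmConst1}$, rather than merely comparable, is what forces the approximate identity to have unit mass and forces us to evaluate $m$ pointwise at the sampling points $\Phi(\gamma)$; it is precisely here that the continuity of $m$, as opposed to mere boundedness, is indispensable.
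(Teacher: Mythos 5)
The first thing to say is that the paper does not prove this theorem at all: it is quoted as a known result, attributed to de Leeuw \cite{Leeuw1965} for the torus-versus-line case and to Edwards and Gaudry \cite[Theorem B.2.1]{EdwardsGaudryBook} in the stated generality, with a proof for the only pair of groups actually used ($\grp=\R^n\times\R/\per\Z$, $\grpH=\R^n\times\R$) deferred to \cite[Theorem 3.4.5]{habil}. So there is no in-paper argument to compare yours against. What can be said is that your sketch reproduces the standard de Leeuw transference scheme that those references carry out: dualize $\Phi$ to $\phi:\grpH\ra\grp$, pull $f$ back to the ($\per$-periodic, in the concrete case) function $f\circ\phi$, damp it by an approximate identity $g_\epsilon$ so that it lies in $\LR{2}(\grpH)\cap\LR{q}(\grpH)$, apply the hypothesis on $\grpH$, and recover the bound on $\grp$ through a mean-value limit in which the normalizing constants cancel. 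The direction of transport, the role of continuity of $m$ (it is sampled at the points $\Phi(\gamma)$), and the mechanism by which the constant is preserved exactly are all correctly identified.

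As a proof, however, the proposal is a skeleton: the two steps that carry essentially all of the content are described rather than executed. First, the claim that $\iFT_{\grpH}\bb{m\cdot\ft{F_\epsilon}}$ differs from $(u\circ\phi)\,g_\epsilon$ by an error vanishing in $\LR{q}$ needs a quantitative argument: $m$ is only continuous and bounded on the non-compact $\dualgrpH$, so the approximation $m(\eta)\approx m(\Phi(\gamma))$ on the bump around $\Phi(\gamma)$ is not uniform over all $\gamma$, and one must split the frequency support into a compact part (where uniform continuity applies) and a tail controlled by the rapid decay of $\ft{f}$ together with $\norm{m}_\infty$. Second, the limit $\norm{(v\circ\phi)g_\epsilon}_{\LR{q}(\grpH)}/c_\epsilon\ra\norm{v}_{\LR{q}(\grp)}$ is exactly the equidistribution/mean-value lemma that you yourself flag as the main obstacle; in full LCA generality this is where the real work of \cite[Theorem B.2.1]{EdwardsGaudryBook} lies (and where the identification of $\ft{f\circ\phi}$ as a measure on $\Phi(\dualgrp)$ can fail to make literal sense, e.g.\ when $\Phi(\dualgrp)$ is dense), whereas for $\grp=\R^n\times\R/\per\Z$ it reduces to an elementary Riemann-sum computation in the time variable alone, with the cut-off taken only in $t$. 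The endpoint $q=\infty$ also needs separate treatment, since both the density argument and the averaging limit are stated in an $\LR{q}$-specific way. None of this points to a wrong idea, but these gaps are precisely the substance of the theorem; filling them in for the concrete pair of groups is what \cite[Theorem 3.4.5]{habil} does, and the present paper legitimately uses the result as a black box.
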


\begin{rem}
We shall employ Theorem \ref{lt_transferenceofmultipliersThm} with $\grpH:=\R^n\times\R$ and $\grp:=\R^n\times\R/\per\Z$. A proof
of the theorem for this particular choice of groups can be found in \cite[Theorem 3.4.5]{habil}. 
\end{rem}

We are now in a position to prove Theorem \ref{lt_TPOseenMappingThm}.
\begin{proof}[Proof of Theorem \ref{lt_TPOseenMappingThm}]\newCCtr[c]{lt_TPOseenMappingThm}\newCCtr[P]{lt_TPOseenMappingThmPolynomial}
By construction, $\ALTP$ is a bounded mapping from $\WSRsigma{2,1}{q}(\grp)$ into $\LRsigma{q}(\grp)$. 
As one may easily verify, the diagram
\begin{align*}
\begin{CD}
\WSRsigma{2,1}{q}(\grp) @>\ALTP>> \LRsigma{q}(\grp)\\
@V\projcompl VV @VV \projcompl V\\
\WSRsigma{2,1}{q}(\grp) @>\ALTP>> \LRsigma{q}(\grp)
\end{CD}
\end{align*}
commutes. Thus also
\begin{align}\label{lt_TPOseenMappingThmbasicmapping}
\ALTP:\WSRsigmacompl{2,1}{q}(\grp)\ra\LRsigmacompl{q}(\grp)
\end{align}
is a bounded map. 

We shall now show that the mapping \eqref{lt_TPOseenMappingThmbasicmapping} is onto. To this end, consider
first a vector field $f\in\projcompl\CRcisigma(\grp)$. Clearly, $f\in\SR(\grp)^n$. In view of \eqref{lt_fouriermultiplierrepresentation}, we put
\begin{align*}
\uvel := \iFT_\grp\Bb{ \frac{1}{i\perf k + \snorm{\xi}^2 -\rey i\xi_1}\ft{f}}.
\end{align*}
At the outset, it is not clear that $\uvel$ is well-defined. However, since $f=\projcompl f$ we recall \eqref{lt_projsymbollemprojcompl} 
to deduce $\ft{f}=\bp{1-\projsymbol}\ft{f}$ and thus
\begin{align}\label{lt_TPOseenMappingThmMmultiplieruvel}
\uvel = \iFT_\grp\Bb{ \frac{\bp{1-\projsymbol(\xi,k)}}{i\perf k + \snorm{\xi}^2 -\rey i\xi_1}\ft{f}} = \iFT_\grp\bb{\Mmultiplier(\xi,k)\cdot \ft{f}}
\end{align}
with 
\begin{align}\label{lt_TPOseenMappingThmmmultiplierdef}
\Mmultiplier:\dualgrp\ra\CNumbers,\quad \Mmultiplier(\xi,k):=\frac{1-\projsymbol(\xi,k)}{\snorm{\xi}^2 + i(\perf k -\rey \xi_1)}.
\end{align}
Observe that the denumerator $\snorm{\xi}^2 + i(\perf k -\rey \xi_1)$ in the definition \eqref{lt_TPOseenMappingThmmmultiplierdef} of $\Mmultiplier$ vanishes only at $(\xi,k)=(0,0)$. 
Since the numerator $1-\projsymbol(\xi,k)$ in \eqref{lt_TPOseenMappingThmmmultiplierdef} vanishes in a neighborhood around $(0,0)$, we see
that $M\in\LR{\infty}(\dualgrp;\CNumbers)$. It therefore follows from \eqref{lt_TPOseenMappingThmMmultiplieruvel} that $\uvel$ is well-defined as an element of 
$\TDR(\grp)$. 
It follows directly from the definition of $\uvel$ that $\ALTP\uvel = f$.
The challenge is now to show that $\uvel\in\WSRsigmacompl{2,1}{q}(\grp)$ and establish the estimate
\begin{align}\label{lt_TPOseenMappingThmbasicestimate}
\norm{\uvel}_{2,1,q} \leq c\, \norm{f}_q
\end{align}
for some constant $c$.
We shall use the transference principle for multipliers, that is, Theorem \ref{lt_transferenceofmultipliersThm}, to establish 
\eqref{lt_TPOseenMappingThmbasicestimate}. For this purpose, let $\chi$ be a ``cut-off'' function with
\begin{align*}
\chi\in\CRci(\R;\R),\quad \chi(\eta)=1\ \text{for}\ \snorm{\eta}\leq\half,\quad \chi(\eta)=0\ \text{for}\ \snorm{\eta}\geq 1.
\end{align*}
We then define
\begin{align}\label{lt_TPOseenMappingThmMmultiplier}
\mmultiplier:\R^n\times\R\ra\CNumbers,\quad \mmultiplier(\xi,\eta):=\frac{1-\chi(\iperf\eta)}{\snorm{\xi}^2 + i(\eta -\rey \xi_1)}.
\end{align}
We can consider $\R^n\times\R$ as a group $\grpH$ with addition as group operation. Endowed with the Euclidean topological, $\grpH$ becomes 
a locally compact abelian group. It is well-known that the dual group $\dualgrpH$ can also be identified with $\R^n\times\R$ equipped with the 
Euclidean topology. We can thus consider $\mmultiplier$ as mapping
$\mmultiplier:\dualgrpH\ra\CNumbers$.
In order to employ Theorem \ref{lt_transferenceofmultipliersThm}, we define 
$\Phi:\dualgrp\ra\dualgrpH$, $\Phi(\xi,k):=(\xi,\perf k)$.
Clearly, $\Phi$ is a continuous homomorphism. Moreover, $\Mmultiplier = \mmultiplier\circ\Phi$.
Consequently, if we can show that $\mmultiplier$ is a continuous $\LR{q}(\R^n\times\R)$-multiplier we may conclude from Theorem \ref{lt_transferenceofmultipliersThm} that $\Mmultiplier$ is an $\LR{q}(\grp)$-multiplier. 
We first observe that the only zero of the denumerator 
$\snorm{\xi}^2 + i(\eta -\rey \xi_1)$
in definition \eqref{lt_TPOseenMappingThmMmultiplier} of $\mmultiplier$ is $(\xi,\eta)=(0,0)$.
Since the numerator $1-\chi\bp{\iperf\eta}$ in \eqref{lt_TPOseenMappingThmMmultiplier} vanishes in a neighborhood of $(0,0)$, we see that $\mmultiplier$ is continuous; in fact $\mmultiplier$ is smooth.  
We shall now apply Marcinkiewicz's multiplier theorem, see for example \cite[Corollary 5.2.5]{Grafakos1} or \cite[Chapter IV, \S 6]{Stein70}, to show that $\mmultiplier$ is an $\LR{q}(\R^n\times\R)$-multiplier. Note that Marcinkiewicz's multiplier theorem
can be employed at this point since $\mmultiplier$ is a Fourier multiplier in the Euclidean $\R^n\times\R$ setting.
To employ Marcinkiewicz's multiplier theorem, we must verify that 
\begin{align}\label{lt_TPOseenMappingThmMarcinkiewczCond}
\sup_{\epsilon\in\set{0,1}^{n+1}}\sup_{(\xi,\eta)\in\R^n\times\R} \snorml{\xi_1^{\epsilon_1}\cdots\xi_n^{\epsilon_n}\eta^{\epsilon_{n+1}}
\partial_{1}^{\epsilon_1}\cdots\partial_{n}^{\epsilon_n}\partial_\eta^{\epsilon_{n+1}}
\mmultiplier(\xi,\eta)} \leq \Cc[lt_TPOseenMappingThmConst1]{lt_TPOseenMappingThm}.
\end{align}
Since $\mmultiplier$ is smooth, \eqref{lt_TPOseenMappingThmMarcinkiewczCond} follows if we can show that all functions of type
\begin{align*}
(\xi,\eta)\ra
\xi_1^{\epsilon_1}\cdots\xi_n^{\epsilon_n}\eta^{\epsilon_{n+1}}
\partial_{1}^{\epsilon_1}\cdots\partial_{n}^{\epsilon_n}\partial_\eta^{\epsilon_{n+1}}
\mmultiplier(\xi,\eta)
\end{align*}
stay bounded as $\snorm{(\xi,\eta)}\ra\infty$. Since $\mmultiplier$ is a rational function with non-vanishing denumerator away from $(0,0)$, this is easy to verify.
Consequently, we conclude \eqref{lt_TPOseenMappingThmMarcinkiewczCond}. 
If we analyze the bound on the functions more carefully,
we find that $\const{lt_TPOseenMappingThmConst1}$ can be chosen such that
$\const{lt_TPOseenMappingThmConst1} = \Cc[lt_TPOseenMappingThmPolynomial1]{lt_TPOseenMappingThmPolynomial}(\rey,\per)$
with $\const{lt_TPOseenMappingThmPolynomial1}(\rey,\per)$ a polynomial in $\rey$ and $\per$.
By Marcinkiewicz's multiplier theorem, see for example \cite[Corollary 5.2.5]{Grafakos1} or \cite[Chapter IV, \S 6]{Stein70},
$\mmultiplier$ is an $\LR{q}(\R^n\times\R)$-multiplier. 
We now recall Theorem \ref{lt_transferenceofmultipliersThm} and conclude that $\Mmultiplier=\mmultiplier\circ\Phi$ is an $\LR{q}(\grp)$-multiplier. 
Since $\uvel=\iFT_\grp\bb{\Mmultiplier(\xi,k)\cdot \ft{f}}$, we thus obtain
\begin{align}\label{lt_TPOseenMappingThmApriori1}
\norm{\uvel}_q\leq \Cc{lt_TPOseenMappingThm}\,\const{lt_TPOseenMappingThmPolynomial1}(\rey,\per)\,\norm{f}_q,
\end{align}
with $\Cclast{lt_TPOseenMappingThm}=\Cclast{lt_TPOseenMappingThm}(q,n)$. 
Differentiating $\uvel$ with respect to time and space, we further obtain from the equation $\uvel=\iFT_\grp\bb{\Mmultiplier(\xi,k)\cdot \ft{f}}$ the identities
\begin{align*}
\partial_t\uvel=\iFT_\grp\Bb{(i\perf k)\,\Mmultiplier(\xi,k)\cdot \ft{f}}
\end{align*}
and
\begin{align*}
\partial_x^\alpha\uvel=\iFT_\grp\Bb{(i\xi)^\alpha\,\Mmultiplier(\xi,k)\cdot \ft{f}},
\end{align*}
respectively.
We can now repeat the argument above with $(i\perf k)\Mmultiplier(\xi,k)$ in the role of the multiplier $\Mmultiplier$,
and $(i\perf \eta)\mmultiplier(\xi,\eta)$ in the role of $\mmultiplier$, to conclude
\begin{align}\label{lt_TPOseenMappingThmApriori2}
\norm{\partial_t\uvel}_q\leq \Cc{lt_TPOseenMappingThm}\,\Cc{lt_TPOseenMappingThmPolynomial}(\rey,\per)\,\norm{f}_q,
\end{align}
with $\Cclast{lt_TPOseenMappingThm}=\Cclast{lt_TPOseenMappingThm}(q,n)$.
Similarly, for $\snorm{\alpha}\leq 2$ we repeat the argument above with $(i\xi)^\alpha\Mmultiplier(\xi,k)$ in the role of $\Mmultiplier$,
and $(i\xi)^\alpha\mmultiplier(\xi,\eta)$ in the role of $\mmultiplier$, and obtain
\begin{align}\label{lt_TPOseenMappingThmApriori3}
\norm{\partial_x^\alpha\uvel}_q\leq \Cc[lt_TPOseenMappingThmApriori3const]{lt_TPOseenMappingThm}\,\Cc[lt_TPOseenMappingThmApriori3poly]{lt_TPOseenMappingThmPolynomial}(\rey,\per)\,\norm{f}_q.
\end{align}
Collecting \eqref{lt_TPOseenMappingThmApriori1}, \eqref{lt_TPOseenMappingThmApriori2} and \eqref{lt_TPOseenMappingThmApriori3}, we thus conclude
\begin{align}\label{lt_TPOseenMappingThmbasicestimateFinal}
\norm{\uvel}_{2,1,q} \leq \Cc{lt_TPOseenMappingThm}\,\Cc{lt_TPOseenMappingThmPolynomial}(\rey,\per) \norm{f}_q,
\end{align}
with $\Cclast{lt_TPOseenMappingThm}=\Cclast{lt_TPOseenMappingThm}(q,n)$. 
Since $\hproj f=f$, we see directly from \eqref{lt_TPOseenMappingThmMmultiplieruvel} that $\hproj\uvel=\uvel$ and thus $\Div \uvel=0$.
Clearly, also $\projcompl\uvel=\uvel$. Recalling \eqref{lt_densitylemmaAnisoWSRsigmaCharacterization} and \eqref{lt_DefOfWSRsigmacomplo}, it follows that
$\uvel\in\WSRsigmacompl{2,1}{q}(\grp)$.
Consequently, we have constructed for arbitrary $f\in\projcompl\CRcisigma(\grp)$ a vector field $\uvel\in\WSRsigmacompl{2,1}{q}(\grp)$ such that 
$\ALTP\uvel=f$ and for which \eqref{lt_TPOseenMappingThmbasicestimateFinal} holds. Since $\CRcisigma(\grp)$ is a dense subset of $\LRsigma{q}(\grp)$,
it follows that $\projcompl\CRcisigma(\grp)$ is dense in $\LRsigmacompl{q}(\grp)$. Thus, by a standard density argument we can find for
any $f\in\LRsigmacompl{q}(\grp)$ a vector field $\uvel\in\WSRsigmacompl{2,1}{q}(\grp)$ that satisfies 
$\ALTP\uvel=f$ and \eqref{lt_TPOseenMappingThmbasicestimateFinal}. 
In particular, we have verified that the mapping \eqref{lt_TPOseenMappingThmbasicmapping} is onto.

Finally, we must verify that the mapping \eqref{lt_TPOseenMappingThmbasicmapping} is injective. 
Consider therefore a vector field $\uvel\in\WSRsigmacompl{2,1}{q}(\grp)$ with $\ALTP \uvel=0$. 
Employing the Fourier transform $\FT_\grp$, we then deduce
$\bp{i\perf k + \snorm{\xi}^2 -\rey i\xi_1} \uvelft = 0$.
Since the polynomial $\snorm{\xi}^2 + i\bp{\perf k -\rey \xi_1}$ vanishes only at $(\xi,k)=(0,0)$,
we conclude that $\supp\ft{\uvel}\subset\set{(0,0)}$. However, since 
$\proj\uvel=0$ we have $\projsymbol\ft{\uvel}=0$, whence $(\xi,0)\notin\supp\ft{\uvel}$ for all $\xi\in\R^n$. Consequently,
$\supp\ft{\uvel}=\emptyset$. It follows that $\ft{\uvel}=0$ and thus $\uvel=0$. We conclude that the mapping \eqref{lt_TPOseenMappingThmbasicmapping} is injective. 

Since the mapping \eqref{lt_TPOseenMappingThmbasicmapping} is bounded and bijective, it is a homeomorphism by the open mapping theorem.
The bound \eqref{lt_TPOseenMappingThmInversebound} follows from \eqref{lt_TPOseenMappingThmbasicestimateFinal}. 
\end{proof}

\begin{rem}
In the proof above, it is crucial that the numerator $1-\chi(\iperf\eta)$ in the fraction that defines $\mmultiplier$ in \eqref{lt_TPOseenMappingThmMmultiplier}
vanishes in a neighborhood of the only zero of the denumerator ${\snorm{\xi}^2 + i(\eta -\rey \xi_1)}$.
Consequently, $\mmultiplier$ is a bounded and even smooth multiplier. The key reason $\mmultiplier$ can be chosen with this structure 
is that the data $f\in\LRsigmacompl{q}(\grp)$ is $\projcompl$-invariant, that is, $\projcompl f=f$. In other words, we would not be able to carry out the argument for a 
general $f\in\LRsigma{q}(\grp)$. This observation illustrates the necessity of the decomposition induced by the projections $\proj$ and $\projcompl$.     
\end{rem}

\begin{proof}[Proof of Theorem \ref{mainresult_tpmaxregthm}]
The statements in Theorem \ref{mainresult_tpmaxregthm} were established in Theorem \ref{lt_TPOseenMappingThm} for the system \eqref{lt_linnspastbodywholespaceregrpformulation}.
To prove Theorem \ref{mainresult_tpmaxregthm}, we therefore only need to verify that \eqref{lt_linnspastbodywholespaceregrpformulation} is fully equivalent
to \eqref{intro_linnssystem}--\eqref{intro_timeperiodicdata}. The verification can be reduced to three simple observations.
We first observe that $\bijection$ induces, by lifting, an 
isometric isomorphism between $\WSRsigmacompl{2,1}{q}(\grp)$ and $\WSRsigmapercompl{2,1}{q}\bp{\R^n\times(0,\per)}$.
We next observe that the trivial extension of a function $F:\R^n\times[0,\per)\ra\R$ to a time-periodic function on $\R^n\times\R$ 
is given by $F\circ(\bijectioninv\circ\quotientmap)$. Finally, we observe that $\quotientmap$ induces, by lifting, an embedding of
$\WSRsigmacompl{2,1}{q}(\grp)$ into the subspace
\begin{align*}
\setc{\uvel\in\LRloc{1}(\R^n\times\R)}{\partial_t^\beta\partial_x^\alpha\uvel \in \LRloc{1}(\R^n\times\R)\text{ for }\snorm{\beta}\leq 1,\,\snorm{\alpha}\leq 2}
\end{align*}
of distributions $\DDR\np{\R^n\times\R}$. Consequently, recalling \eqref{lt_defofgrpderivatives}, we deduce for any element $\uvel\in\WSRsigmacompl{2,1}{q}(\grp)$ that
$\partial_t^\beta\partial_x^\alpha\uvel = \bb{\partial_t^\beta\partial_x^\alpha (\uvel\circ\quotientmap)}\circ\bijectioninv$ for $\snorm{\alpha}\leq 2$, $\snorm{\beta}\leq 1$ with the derivatives $\partial_t^\beta\partial_x^\alpha (\uvel\circ\quotientmap)$ being understood in the sense of distributions and  
thereby, by the embedding above, as functions in $\LRloc{1}(\R^n\times\R)$.

Now consider a vector field $f\in\LRsigmacompl{q}\bp{\R^n\times(0,\per)}$. Clearly, $f\circ\bijectioninv\in\LRsigmacompl{q}(\grp)$. 
Recall Theorem \ref{lt_TPOseenMappingThm} and put $\tuvel:=\ALTPinverse\bp{f\circ\bijectioninv}\in\WSRsigmacompl{2,1}{q}(\grp)$. 
Then
$\uvel:=\tuvel\circ\bijection\in\WSRsigmapercompl{2,1}{q}\bp{\R^n\times(0,\per)}$ with
\begin{align*}
\norm{\uvel}_{2,1,q} = \norm{\tuvel}_{2,1,q} \leq \norm{\ALTPinverse} \norm{f\circ\bijectioninv}_q = \norm{\ALTPinverse} \norm{f}_q.
\end{align*}
Recalling \eqref{lt_TPOseenMappingThmInversebound}, we see that $\uvel$ satisfies \eqref{mainresult_tpmaxregthmEstimate}.
Moreover, since $\ALTP \tuvel = f\circ\bijectioninv$ it follows that
$\bb{\ALTP(\tuvel\circ\quotientmap)}\circ\bijectioninv = f\circ\bijectioninv$, from which we can easily deduce
$\ALTP\bp{\uvel\circ(\bijectioninv\circ\quotientmap)}=f\circ(\bijectioninv\circ\quotientmap)$. The latter identity shows that the
trivial extensions of $\uvel$ and $f$ to time-periodic vector fields on $\R^n\times\R$ satisfy $\eqref{intro_linnssystem}$ with $\upres=0$.

It remains to verify uniqueness of the solution $\uvel$. If, however, $\ouvel\in\WSRsigmapercompl{2,1}{q}\bp{\R^n\times(0,\per)}$ is another solution, then 
$(\uvel-\ouvel)\circ\bijectioninv\in\WSRsigmacompl{2,1}{q}(\grp)$ with $\ALTP\bb{(\uvel-\ouvel)\circ\bijectioninv}=0$. Hence $\uvel-\ouvel=0$ by the injectivity
of $\ALTP$ established in Theorem \ref{lt_TPOseenMappingThm}.
\end{proof}

We proceed with the proof of Theorem \ref{mainresult_maxregthm}. In order to characterize the pressure term in \eqref{intro_linnssystem}, we need
the following lemma:

\begin{lem}\label{lt_PressureMappingLem}
Let $q\in(1,n)$. Put
\begin{align}\label{lt_PressureMappingLemPressureSpace}
\begin{aligned}
&\xpres{q}(\grp):=\setc{\upres\in\TDR(\grp)\cap\LRloc{1}(\grp)}{\norm{\upres}_{\xpres{q}}<\infty},\\
&\norm{\upres}_{\xpres{q}}:=\bigg( \iper\int_0^\per \norm{\upres(\cdot,t)}_{\frac{nq}{n-q}}^q\,\dt + \norm{\grad\upres}_q^q\bigg)^{1/q}.
\end{aligned}
\end{align}
Then 
\begin{align}\label{lt_PressureMappingLemGradmap}
\begin{aligned}
\gradmap: \xpres{q}(\grp)\ra \gradspace{q}(\grp),\quad \gradmap\,\upres:=\grad\upres
\end{aligned}
\end{align}
is a homeomorphism. Moreover, $\norm{\gradmapinverse}$ depends only on $n$ and $q$. 
\end{lem}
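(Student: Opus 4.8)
The plan is to reduce the statement to the well-known steady-state result on $\R^n$ by exploiting that both the pressure space $\xpres{q}(\grp)$ and the gradient space $\gradspace{q}(\grp)$ fiber over the time variable. Recall from the proof of Lemma \ref{lt_HelmholtzProjLem} that the Helmholtz projection $\hproj$ acts only in the spatial variable; consequently $\id-\hproj$ coincides fiberwise with the classical projection on $\R^n$, so a field $g\in\gradspace{q}(\grp)$ satisfies $g(\cdot,t)\in\gradspace{q}(\R^n)$ for a.e.\ $t$ together with $\iper\int_0^\per\norm{g(\cdot,t)}_{\LR{q}(\R^n)}^q\,\dt<\infty$.

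First I would verify that $\gradmap$ is well-defined and bounded. For $\upres\in\xpres{q}(\grp)$ the norm already controls $\grad\upres\in\LR{q}(\grp)^n$, and since the multiplier of $\hproj$ annihilates $\ft{\grad\upres}=i\xi\,\ft{\upres}$, we have $\hproj\grad\upres=0$, whence $\grad\upres=(\id-\hproj)\grad\upres\in\gradspace{q}(\grp)$ and $\norm{\grad\upres}_q\leq\norm{\upres}_{\xpres{q}}$. Injectivity is immediate: if $\grad\upres=0$, then for a.e.\ $t$ the function $\upres(\cdot,t)$ is spatially constant, and finiteness of $\norm{\upres(\cdot,t)}_{\frac{nq}{n-q}}$ over the infinite-measure space $\R^n$ forces this constant to vanish, so $\upres=0$.

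The core of the argument is surjectivity together with the bound on $\gradmapinverse$, which I would obtain fiberwise. It is well-known (see \cite[Chapter III]{galdi:book1}) that for $q\in(1,n)$ every curl-free field $h\in\gradspace{q}(\R^n)$ admits a unique potential $\varpi\in\LR{\frac{nq}{n-q}}(\R^n)$ with $\grad\varpi=h$ and $\norm{\varpi}_{\frac{nq}{n-q}}+\norm{\grad\varpi}_q\leq C_{n,q}\,\norm{h}_q$, the constant depending only on $n$ and $q$. Given $g\in\gradspace{q}(\grp)$, I would apply this for a.e.\ $t$ to $h:=g(\cdot,t)$, obtaining $\upres(\cdot,t)$; uniqueness of the potential guarantees that $t\mapsto\upres(\cdot,t)$ is measurable, so $\upres$ defines an element of $\LRloc{1}(\grp)$. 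Raising the pointwise estimate to the $q$-th power and integrating over the period yields
\begin{align*}
\norm{\upres}_{\xpres{q}}^q=\iper\int_0^\per\Bp{\norm{\upres(\cdot,t)}_{\frac{nq}{n-q}}^q+\norm{\grad\upres(\cdot,t)}_q^q}\,\dt\leq C_{n,q}^q\,\iper\int_0^\per\norm{g(\cdot,t)}_q^q\,\dt=C_{n,q}^q\,\norm{g}_q^q,
\end{align*}
so $\upres\in\xpres{q}(\grp)$ with $\norm{\upres}_{\xpres{q}}\leq C_{n,q}\,\norm{g}_q$. Since the spatial gradient is applied fiberwise, testing against $\phi\in\CRci(\grp)$ and using Fubini upgrades the fiberwise identity to $\grad\upres=g$ as a distribution on $\grp$, and the map $g\mapsto\upres$ is exactly $\gradmapinverse$.

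Combining boundedness, injectivity, surjectivity, and the explicit bounded inverse, $\gradmap$ is a continuous bijection with bounded inverse, hence a homeomorphism, and $\norm{\gradmapinverse}$ depends only on $n$ and $q$. The step I expect to require the most care is the passage from the fiberwise construction to a genuine function on $\grp$: one must confirm the measurability of $t\mapsto\upres(\cdot,t)$ (which follows from uniqueness of the steady-state potential) and that the fiberwise identity $\grad\upres(\cdot,t)=g(\cdot,t)$ indeed yields the distributional identity $\grad\upres=g$ on all of $\grp$ — here the absence of any time-regularity requirement in $\xpres{q}(\grp)$ is what makes the reduction clean.
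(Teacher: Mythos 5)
Your argument is correct, and it rests on the same underlying observation as the paper's proof: both $\id-\hproj$ and the potential operator act only in the spatial variable, so the whole problem reduces, for fixed $t$, to a steady-state statement on $\R^n$, and the $\xpres{q}$-bound follows by raising the spatial estimate to the $q$-th power and averaging over the period. The implementations differ in one meaningful way. Rather than invoking the steady-state potential theorem fiberwise, the paper defines a single global operator $\cali(f):=\iFT_{\R^n}\bb{\xi_j\snorm{\xi}^{-2}\,\FT_{\R^n}(f_j)}$ on $\SR(\grp)^n$, verifies $\grad\cali(f)=(\id-\hproj)f$, obtains the bound $\iper\int_0^\per\norm{\cali(f)(\cdot,t)}_{\frac{nq}{n-q}}^q\,\dt\leq C\norm{f}_q^q$ from the Hardy--Littlewood--Sobolev inequality for the Riesz potential together with boundedness of the Riesz transforms, and then extends $\cali$ to all of $\LR{q}(\grp)^n$ by density. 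This buys precisely what you flag as the delicate point of your route: because $\cali$ is defined globally on $\grp$ and extended by continuity, there is nothing to check about measurability of $t\mapsto\upres(\cdot,t)$, and the identity $\grad\cali(f)=(\id-\hproj)f$ is already a distributional identity on $\grp$ rather than a family of fiberwise identities to be glued. Your version can be completed as you indicate --- uniqueness of the normalized potential plus approximation by smooth fields yields measurability, and Fubini upgrades the fiberwise gradient identity --- but the global-operator-plus-density formulation sidesteps these steps entirely; in exchange, your route makes the dependence on the classical steady-state theory (Galdi, Chapter III) more transparent. Both give an explicit inverse and hence $\norm{\gradmapinverse}$ depending only on $n$ and $q$.
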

\begin{proof}\newCCtr[c]{lt_PressureMappingLem}
Clearly, $\gradmap$ is bounded. Consider $\upres\in\ker\gradmap$. Then $\grad\upres=0$ and it thus follows by standard arguments that 
$\upres(x,t)=c(t)$. Since $\norm{\upres}_{\xpres{q}}<\infty$, we must have $\upres=0$. Consequently, $\gradmap$ is injective.
To show that $\gradmap$ is onto, we consider the mapping
\begin{align*}
\cali:\SR(\grp)^n\ra\TDR(\grp),\quad \cali(f):=\iFT_{\R^n}\Bb{\frac{\xi_j}{\snorm{\xi}^2}\cdot \FT_{\R^n}\bp{f_j}},
\end{align*}
where $\FT_{\R^n}:\SR(\grp)\ra\SR(\grp)$ denotes the partial Fourier transform. Observe that
\begin{align}\label{lt_PressureMappingLemInvProp}
\grad\cali(f) = \bp{\id-\hproj} f.
\end{align}
Since $\cali$ can expressed as a Riesz potential composed with a sum of Riesz operators, 
well-known properties of the Riesz potential (see for example \cite[Theorem 6.1.3]{Grafakos2}) and the Riesz operators 
(see for example \cite[Corollary 4.2.8]{Grafakos1}) yield
\begin{align}\label{lt_PressureMappingLemPartialEstOfI}
\iper\int_0^\per \norm{\cali(f)(\cdot,t)}_{\frac{nq}{n-q}}^q\,\dt\leq 
\Cc{lt_PressureMappingLem} \iper\int_0^\per \norm{ f(\cdot,t)  }_{q}^q\,\dt = \Cclast{lt_PressureMappingLem}\,\norm{f}_q^q
\end{align}
with $\Cclast{lt_PressureMappingLem}=\Cclast{lt_PressureMappingLem}(n,q)$.
In combination with \eqref{lt_PressureMappingLemInvProp}, \eqref{lt_PressureMappingLemPartialEstOfI} implies
$\norm{\cali(f)}_{\xpres{q}} \leq \Cc{lt_PressureMappingLem}\, \norm{f}_q$.
By a density argument, we can extend $\cali$ uniquely to a bounded map 
$\cali:\LR{q}(\grp)^n\ra\xpres{q}(\grp)$
that satisfies \eqref{lt_PressureMappingLemInvProp} for all $f\in\LR{q}(\grp)^n$. We can now show that $\gradmap$ is onto. 
If namely $f\in\gradspace{q}(\grp)$, then $\grad \cali(f)=f$. We conclude by the open mapping theorem that
$\gradmap$ is a homeomorphism. In fact, the inverse is given by $\cali$, whence by \eqref{lt_PressureMappingLemPartialEstOfI} $\norm{\gradmapinverse}$ depends
only on $n$ and $q$.
\end{proof}

\begin{proof}[Proof of Theorem \ref{mainresult_maxregthm}]\newCCtr[c]{mainresult_maxregthm}
Let $f\in\LR{q}\bp{\R^n\times(0,\per)}^n$. We put $\tf:=f\circ\bijectioninv\in\LR{q}(\grp)^n$ and employ the Helmholtz projection to decompose 
$\tf=\hproj \tf + (\id-\hproj)\tf$. 
By Lemma \ref{lt_HelmholtzProjLem}, $\hproj \tf\in\LRsigma{q}(\grp)$. 
We further decompose $\hproj \tf=\proj\hproj \tf+\projcompl\hproj\tf$ and recall from Lemma \ref{lt_decomplemma} that $\proj\hproj \tf\in\LRsigma{q}(\R^n)$ and 
$\projcompl\hproj \tf\in\LRsigmacompl{q}(\grp)$. 

Well-known theory for the linearized, steady-state Navier-Stokes system, see for example
\cite[Theorem IV.2.1]{galdi:book1} and
\cite[Theorem VII.4.1]{galdi:book1}, implies existence of a unique solution $\vvel\in\maxregspacelinnss{q}(\R^n)$ to 
$-\Delta\vvel-\rey\partial_1\vvel=\proj\hproj \tf$
that satisfies 
\begin{align}\label{mainresult_maxregthmProofEstOfvvel}
\norm{\vvel}_{\maxregspacelinnss{q}} \leq \Cc{mainresult_maxregthm}\norm{\proj\hproj\tf}_q\leq \Cclast{mainresult_maxregthm}\norm{f}_q,
\end{align}
with $\Cclast{mainresult_maxregthm}=\Cclast{mainresult_maxregthm}(n,q)$. Here,
$\maxregspacelinnss{q}(\R^n)$ is defined by \eqref{mainresult_maxregthmDefOfXs1}, \eqref{mainresult_maxregthmDefOfXs2} or \eqref{mainresult_maxregthmDefOfXs3} depending on the choice of $n,\rey,q$. 

Recalling Theorem \ref{lt_TPOseenMappingThm}, we put $\twvel:=\ALTPinverse\projcompl\hproj\tf\in\WSRsigmacompl{2,1}{q}(\grp)$.
As in the proof of Theorem \ref{mainresult_tpmaxregthm}, we then observe that $\wvel:=\twvel\circ\bijection\in\WSRsigmapercompl{2,1}{q}\bp{\R^n\times(0,\per)}$ and 
that the trivial extension of $\wvel$ to a time-periodic vector field on $\R^n\times\R$ is given by $\wvel\circ(\bijectioninv\circ\quotientmap)$ and satisfies
$\ALTP\bb{\wvel\circ(\bijectioninv\circ\quotientmap)}=\bb{\projcompl\hproj\tf}\circ\quotientmap$ in the sense of distributions.
Moreover, using \eqref{lt_TPOseenMappingThmInversebound} we can estimate
\begin{align}\label{mainresult_maxregthmProofEstOfwvel}
\norm{\wvel}_{2,1,q} = \norm{\twvel}_{2,1,q} \leq \norm{\ALTPinverse} \norm{\projcompl\hproj\tf}_q \leq \Cc{mainresult_maxregthm} \norm{f}_q 
\end{align}
with $\Cclast{mainresult_maxregthm}=\Cclast{mainresult_maxregthm}(\rey,\per,q,n)$.

We now put $\uvel:=\vvel+\wvel$. We then have $u\in\maxregspacelinnss{q}(\R^n)\oplus\WSRsigmapercompl{2,1}{q}\bp{\R^n\times(0,\per)}$ and 
$\ALTP\bb{\uvel\circ(\bijectioninv\circ\quotientmap)}=\bb{\hproj\tf}\circ\quotientmap$ in the sense of distributions.

Finally, we recall Lemma \ref{lt_PressureMappingLem} and put $\tupres:=\gradmapinverse\bb{\np{\id-\hproj}\tf}\in\xpres{q}(\grp)$.
One readily verifies that $\bijection$ induces, by lifting, an isometric isomorphism between $\xpres{q}(\grp)$ and $\xpres{q}\bp{\R^n\times(0,\per)}$.
Thus $\upres:=\tupres\circ\bijection\in\xpres{q}\bp{\R^n\times(0,\per)}$. Since $\grad\tupres=\np{\id-\hproj}\tf$, 
if follows by the same observations as those made in the proof of Theorem \ref{mainresult_tpmaxregthm} that 
$\grad\bb{\upres\circ(\bijectioninv\circ\quotientmap)}=\bb{(\id-\hproj)\tf}\circ\quotientmap$ in the sense of distributions. 
Moreover, we can estimate
\begin{align}\label{mainresult_maxregthmProofEstOfupres}
\norm{\upres}_{\xpres{q}} = \norm{\tupres}_{\xpres{q}} \leq \norm{\gradmapinverse} \norm{\np{\id-\hproj}\tf}_q \leq \Cc{mainresult_maxregthm} \norm{f}_q 
\end{align}
with $\Cclast{mainresult_maxregthm}=\Cclast{mainresult_maxregthm}(q,n)$.

We conclude that $\ALTP\bb{\uvel\circ(\bijectioninv\circ\quotientmap)}+\grad\bb{\upres\circ(\bijectioninv\circ\quotientmap)}=f\circ(\bijectioninv\circ\quotientmap)$
in the sense of distributions.
Since both $\vvel$ and $\wvel$ are solenoidal vector fields, we see that also $\Div\bb{\uvel\circ(\bijectioninv\circ\quotientmap)}=0$. 
We have thus shown that the trivial extension of $(\uvel,\upres)$ and $f$ to time-periodic vector fields on $\R^n\times\R$ is a solution to 
\eqref{intro_linnssystem}--\eqref{intro_timeperiodicdata}. Furthermore, by \eqref{mainresult_maxregthmProofEstOfvvel}, \eqref{mainresult_maxregthmProofEstOfwvel} and 
\eqref{mainresult_maxregthmProofEstOfupres} we have established \eqref{mainresult_maxregthmEst}.

It remains to show uniqueness of the solution. Assume 
\begin{align*}
(\ouvel,\oupres)\in\Bp{\maxregspacelinnss{q}(\R^n)\oplus\WSRsigmapercompl{2,1}{q}\bp{\R^n\times(0,\per)}}\times\xpres{q}\bp{\R^n\times(0,\per)}
\end{align*}
is another solution. Then $0=\projcompl\hproj\bb{\ALTP\np{\ouvel-\uvel}\circ\bijectioninv}=\ALTP\bb{\projcompl\np{\ouvel-\uvel}\circ\bijectioninv}$. The injectivity of
$\ALTP$ established in Theorem \ref{lt_TPOseenMappingThm} thus implies $\projcompl\np{\ouvel-\uvel}\circ\bijectioninv=0$. 
Similarly, we see that $0=\proj\hproj\bb{\ALTP\np{\ouvel-\uvel}\circ\bijectioninv}=(\Delta-\rey\partial_1)\bb{\proj\np{\ouvel-\uvel}\circ\bijectioninv}$.
By well-known theory for the linearized, steady-state Navier-Stokes system, see again \cite[Theorem IV.2.1]{galdi:book1} and
\cite[Theorem VII.4.1]{galdi:book1}, $(\Delta-\rey\partial_1):\maxregspacelinnss{q}(\R^n)\ra\LRsigma{q}(\R^n)$ is a homeomorphism, whence 
$\proj\np{\ouvel-\uvel}\circ\bijectioninv=0$ follows. We can now conclude that $\ouvel-\uvel=0$. As a consequence hereof we then have $\grad\np{\oupres-\upres}=0$, from 
which, by the injectivity of $\gradmap$ established in Lemma \ref{lt_PressureMappingLem}, we deduce $\oupres-\upres=0$. 
Hence $(\ouvel,\oupres)=(\uvel,\upres)$.
\end{proof}

\bibliographystyle{abbrv}

\begin{thebibliography}{1}

\bibitem{Bruhat61}
F.~Bruhat.
\newblock {Distributions sur un groupe localement compact et applications \`a
  l'\'etude des repr\'esentations des groupes $p$-adiques.}
\newblock {\em Bull. Soc. Math. Fr.}, 89:43--75, 1961.

\bibitem{Leeuw1965}
K.~de~Leeuw.
\newblock {On $L\sb p$ multipliers.}
\newblock {\em Ann. Math. (2)}, 81:364--379, 1965.

\bibitem{EdwardsGaudryBook}
R.~Edwards and G.~Gaudry.
\newblock {\em {Littlewood-Paley and multiplier theory.}}
\newblock {Berlin-Heidelberg-New York: Springer-Verlag}, 1977.

\bibitem{galdi:book1}
G.~P. Galdi.
\newblock {\em {An introduction to the mathematical theory of the Navier-Stokes
  equations. Vol. I: Linearized steady problems}}.
\newblock {Springer Tracts in Natural Philosophy. 38. New York:
  Springer-Verlag}, 1994.

\bibitem{Grafakos1}
L.~Grafakos.
\newblock {\em {Classical Fourier analysis. 2nd ed.}}
\newblock {New York, NY: Springer}, 2008.

\bibitem{Grafakos2}
L.~Grafakos.
\newblock {\em {Modern Fourier analysis. 2nd ed.}}
\newblock {New York, NY: Springer}, 2009.
                                                                                                                                     
\bibitem{habil}                                                                                                                      
M.~Kyed.                                                                                                                             
\newblock {Time-Periodic Solutions to the Navier-Stokes Equations}.                                                                  
\newblock {\em {Habilitationsschrift, Technische Universit{\"a}t Darmstadt}},                                                        
  2012.                                                                                                                              
                                                                                                                                     
\bibitem{Stein70}                                                                                                                    
E.~M. Stein.                                                                                                                         
\newblock {\em {Singular integrals and differentiability properties of                                                               
  functions.}}                                                                                                                       
\newblock {Princeton, N.J.: Princeton University Press}, 1970.                                                                       
                                                                                                                                     
\end{thebibliography}

\end{document}